\theoremstyle{thmstyleone}%
\newtheorem{theorem}{Theorem}
\newtheorem{proposition}[theorem]{Proposition}%
\newtheorem{lemma}[theorem]{Lemma}
\theoremstyle{thmstyletwo}%
\newtheorem{remark}{Remark}%
\theoremstyle{thmstylethree}%
\newtheorem{definition}{Definition}%
\newcommand{\Di}{\Diamond}
\renewcommand{\phi}{\varphi}
\title{Axiomatizing the Logic of Ordinary Discourse
}
\author{\href{https://orcid.org/0000-0003-3240-386X}{\includegraphics[scale=0.06]{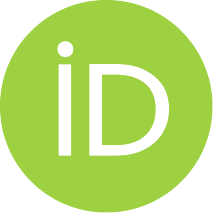}}\hspace{1mm}%
  Vitor Greati \\
  Bernoulli Institute \\
  University of Groningen \\
  Groningen, The Netherlands\\
  \texttt{v.rodrigues.greati@rug.nl} \\
   \And
  \href{https://orcid.org/0000-0002-6941-7555}{\includegraphics[scale=0.06]{orcid.pdf}\hspace{1mm}}Sérgio Marcelino \\
  SQIG – Instituto de Telecomunicações\\
  Departamento de Matemática -- Instituto Superior Técnico \\
  Lisboa, Portugal\\
\texttt{smarcel@math.tecnico.ulisboa.pt} \\
  \And
  \href{https://orcid.org/0000-0003-1364-5003}{\includegraphics[scale=0.06]{orcid.pdf}}\hspace{1mm}%
  Umberto Rivieccio \\
  Departamento de Lógica, Historia y Filosofía de la Ciencia \\
  Universidad Nacional de Educación a Distancia \\
  Madrid, Spain\\
\texttt{umberto@fsof.uned.es} \\
}
\begin{document}
\maketitle

\begin{abstract}
Most non-classical logics are subclassical, that is,
every inference/theorem they validate
is also
valid classically. 
A notable exception is the three-valued propositional \emph{Logic of Ordinary Discourse}
($\CooOL$) proposed and extensively motivated by W.S.~Cooper 
as a more adequate candidate for formalizing everyday  reasoning (in English).
$\CooOL$ challenges 
classical logic not only by rejecting some 
theses, but also by accepting non-classically valid principles,  
such as
so-called Aristotle's and Boethius' theses. 
Formally,
$\CooOL$  shows a number of 
unusual features
--
it is non-structural, connexive, paraconsistent and contradictory
--
making it all the more interesting for the mathematical logician. 
We present 
our recent findings on 
$\CooOL$ and its structural companion (that we call $\SOL$).
We introduce  Hilbert-style multiple-conclusion calculi for
$\CooOL$ and $\SOL$
 that are both modular and analytic,
and easily allow us to obtain single-conclusion axiomatizations. We prove
that 
$\SOL$
is algebraizable and single out its equivalent 
semantics, which turns out to be a discriminator variety generated by a three-element algebra. 
Having observed that $\SOL$ can express the connectives of other three-valued logics,
we prove that it is definitionally 
equivalent to an expansion of the three-valued logic $\mathcal{J}3$ of D'Ottaviano and da Costa, itself an axiomatic extension of paraconsistent 
Nelson logic.
\keywords{Ordinary discourse  \and Multiple-conclusion systems \and Algebraic semantics \and Connexive logics.}
\end{abstract}

\section{Introduction}
\label{sec:intro}
Most non-classical propositional systems
result from weakening  Boolean two-valued  logic one way or another:
 this applies in general to  logics in the fuzzy, many-valued, relevance and substructural families. 
 The majority of three-valued logics are also subclassical in this sense (see~\cite{threevalchapter} for an  overview). 
A remarkable 
exception 
is represented  by the 
\emph{Logic of Ordinary Discourse} introduced by W.S.~Cooper~\cite{Cooper1968}, a propositional system (henceforth denoted $\CooOL$) 
   remarkable in other ways as well.

As the name indicates, $\CooOL$ was proposed as a logic 
for formalizing \emph{ordinary discourse}, that is, it was  
designed to model the  everyday usage of  natural language connectives (in particular   the \emph{if-then}).
 In this respect, the main point of departure of $\CooOL$ from  classical logic concerns those conditional sentences
having a false antecedent. Following a famous suggestion of Quine, Cooper regards such sentences as lacking a truth value,
but formally represents this condition  by employing a third (or ``gap'') value $\uv$ besides the classical $ \tv$ and $ \bv $. 
 $\CooOL$ may thus be defined by means of {valuations over}
a three-element logical matrix (
Figure~\ref{fig:truth-tables2})
 over the set of truth values
$\{ \bv, \tv, \uv \}$ where, interestingly, both $\tv$ and the gap value $\uv$ are designated.
{However, as we shall soon discuss, $\CooOL$ is \emph{not} determined
by this three-valued matrix in the usual sense, for in its definition not all
valuations are allowed.}
\begin{figure}
    \centering
    \begin{tabular}{@{}c|ccc@{}}
        \toprule
         $\land$ & \uv & \tv & \bv \\
        \midrule
         \uv & \uv & \tv & \bv\\
         \tv & \tv & \tv & \bv \\
         \bv & \bv & \bv & \bv \\
        \bottomrule
    \end{tabular}
    \quad
    \begin{tabular}{@{}c|ccc@{}}
        \toprule
         $\lor$ & \uv & \tv & \bv \\
        \midrule
         \uv & \uv & \tv & \bv\\
         \tv & \tv & \tv & \tv \\
         \bv & \bv & \tv & \bv \\
        \bottomrule
    \end{tabular}
    \quad 
    \begin{tabular}{@{}c|ccc@{}}
        \toprule
         $\to$ & \uv & \tv & \bv\\
        \midrule
         \uv & \uv & \tv & \bv\\
         \tv & \uv & \tv & \bv\\
         \bv & \uv & \uv & \uv\\
        \bottomrule
    \end{tabular}
    \quad 
    \begin{tabular}{@{}c|c@{}}
        \toprule
         & $\neg$ \\
         \midrule
         \uv & \uv\\
         \tv & \bv\\
         \bv & \tv\\
         \bottomrule
    \end{tabular}
    
%
\caption{Truth tables for $\CooOL$~\cite[Sec.~5]{Cooper1968}.}
\label{fig:truth-tables2}
\end{figure}



$\CooOL$
validates certain formulas that are not classical tautologies
-- notably 
so-called \emph{Boethius' thesis} 
$
(\phi \to \psi) \to \nnot (\phi \to \nnot \psi)$
and \emph{Aristotle's thesis} 
$ \nnot (\nnot \psi \to  \psi)$ 
--
suggesting that it 
may be viewed as
a  \emph{connexive} logic~\cite{WanConnexive05}. 
On the other hand, some classical tautologies 
 (such as the explosion {principle} $\phi \to( \nnot \phi \to \psi) $)  are not unconditionally valid in $\CooOL$%
, making
it incomparable with (not weaker than) classical propositional logic. 

{
Cooper~\cite{Cooper1968} 
dicusses several examples supporting
the claim that $\CooOL$
provides a better formal model  of ordinary discourse  than classical 
logic. Here
we  may  mention but one, 
a puzzle that can be traced back to Aristotle, as reconstructed by J.~\L ukasiewicz~\cite[pp.~312-3]{Cooper1968}.
In Aristotle's view, no proposition can imply its own negation, hence
$ \nnot (\nnot \psi \to  \psi)$ should be a tautology. 
But, classically,  $(\phi \to \psi) \land (\nnot \phi \to \psi) \vdash \nnot \psi \to \psi $.
Hence, by contraposition,  $(\phi \to \psi) \land (\nnot \phi \to \psi)$ should be contradictory,
which seems counter-intuitive. 
Now, 
in $\CooOL$,
Aristotle's thesis   $\nnot( \nnot \psi \to \psi)$  is indeed a tautology. 
We also have, as in classical logic, that   $(\phi \to \psi) \land (\nnot \phi \to \psi) \vdash \nnot \psi \to \psi $ 
and 
$(\phi \to \psi) \land (\nnot \phi \to \psi) $ is satisfiable. 
$\CooOL$, however, rejects  the contraposition rule
($\alpha \vdash  \beta$ does not entail  $\nnot  \beta \vdash  \nnot \alpha$), so
 $\nnot (\nnot \psi \to \psi) \not \vdash \nnot ((\phi \to \psi) \land (\nnot \phi \to \psi))$, avoiding 
the counter-intuitive consequence.}
We shall not further
discuss  the adequacy of  $\CooOL$ with respect to the proposed applications
(refer to~\cite{Cooper1968}), but we wish to draw attention
to some of its unusual  features.  

For one thing, $\CooOL$ is not even a logic in the Tarskian sense, for its consequence relation is 
 \emph{non-structural}, i.e.~not closed under uniform substitution.
This  is witnessed, for instance, by the explosion principle, which  $\CooOL$ endorses for atomic formulas
($p, \nnot p \vdash q $) but not for arbitrary ones ($\phi, \nnot \phi \not \vdash \psi $). 
In consequence, $\CooOL$ is not characterized by truth tables in the usual way;  it can, however,  be semantically characterized 
by the three-valued matrix described in Figure~\ref{fig:truth-tables2} if we require valuations to assign only classical values 
($\bv$ or $\tv$)
to the propositional variables.

Let us mention a few    other unusual 
features of $\CooOL$, 
whose 
language 
consists of a conjunction $(\land)$, a disjunction $(\lor)$, an implication $(\to)$ and a negation $(\nnot)$.
The usual De Morgan laws relating conjunction and disjunction via negation hold, and indeed either $\land$
or $\lor$ could be omitted from the primitive signature. 
However, neither the distributive nor the absorption laws between conjunction and disjunction hold. 
{Indeed, 
$\land$ and $\lor $
do not give rise to a lattice structure in the expected way (cf.~Proposition~\ref{prop:ol3asn4} below); for this reason they have been called
\emph{quasi-conjunction} and \emph{quasi-disjunction} in the literature (see~\cite[Sec.~6.3]{egre20211} for a discussion of
their motivation). A novel contribution to an informal reading of these connectives may perhaps be based on our
algebraic analysis of $\CooOL$. As we shall see (Section~\ref{sec:alg}), $\CooOL$ may be viewed as
an expansion of
Da Costa and D'Ottaviano's 
logic $\mathcal{J}3$, which is in turn 
an axiomatic extension of paraconsistent 
Nelson logic. These are substructural logics based on residuated structures having both 
a lattice (or \emph{additive}) conjunction and a \emph{multiplicative} conjunction; 
on the 
algebraic models, the latter is realized by a monoid operation  having (in our notation)
$\uv$ as neutral element. This suggests that 
Cooper's connective $\land$ is perhaps best thought of as
a multiplicative conjunction having $\lor$ as its De Morgan dual (defined by 
$x \lor y : = \nnot (\nnot x \land \nnot y)$), the truncated sum of \L ukasiewicz logic being another example of this kind of disjunction.
}

The behaviour of $\lor $ 
in $\CooOL$ is indeed peculiar, even 
in isolation. 
In fact, the preceding observations entail that  disjunction introduction is not a sound rule (in general, we have
$\phi \not \vdash \phi \lor \psi$).
In many logics, another key feature of the disjunction is that the truth value of a formula
$ \phi \lor \psi $ is designated if and only if either  $\phi$ or $\psi$ is assigned a designated value. This does not hold
in $\CooOL$ (one has, for instance $\uv \lor \bv = \bv$). In consequence, a classical tautology such as $\phi \lor (\phi \to \psi)$
is
contigent 
in $\CooOL$, although 
every valuation assigns a designated value either to $\phi$ or to $\phi \to \psi$.
%
%
The behaviour of $\to$ 
within $\CooOL$ appears to be more standard, at least in isolation.
Unlike the disjunction, the implication is {not} definable from the other connectives in any of the  usual ways.
Indeed, it is easy to see
that 
$\to$ is not definable at all in the language  $\{ \land, \lor, \nnot \}$, for
the set $\{ \bv, \tv \}$ is closed under all these operations but not under the implication.
On the other hand, the truth constants (all three of them)
are definable. For instance, one may let  $\uv := \phi \to (\nnot \phi \to \phi )$,
$\tv := p \lor \neg p$ and 
$\bv := \nnot \tv$ 
(here $p$ needs to be atomic, while 
 $\phi$ is arbitrary). 

Lastly, observe that  $\CooOL$ is not only a paraconsistent (for $\phi, \nnot \phi \not \vdash \psi $) but actually
a \emph{contradictory} logic in H.~Wansing's sense~\cite{WansingForthcoming-WANCLI-2},
that is, there exists a formula (e.g.~$\uv$, defined as above)
such that both $\vdash \uv$ and $\vdash \nnot \uv$ hold.  This 
has the interesting consequence that $\CooOL$
does not admit any non-trivial structural extension: if we were to close the consequence relation of $\CooOL$ under uniform substitutions, then any formula $\phi$ would be valid,
 being (by the explosion rule, now applicable to arbitrary formulas) a consequence of the set of valid formulas $\{ \uv, \nnot \uv \}$.

We introduce
 multiple- and single-conclusion Hilbert-style axiomatizations 
for $\CooOL$ and for its structural companion $\SOL$ (Sections~\ref{sec:axiomatization}),
{ thus filling the gap in the literature concerning a standard axiomatization
of $\CooOL$}.
Our calculi are 
  modular {(i.e.,~obtained  by joining independent calculi over smaller signatures)}, easily allowing us to characterize a number of fragments of $\CooOL$/$\SOL$. 
  In the multiple-conclusion setting,
  they are also analytic.
  {We obtain these axiomatizations 
  via the methods developed in \cite{ss1978,marcelino19woll}}.
In Section~\ref{sec:alg} we give an alternative axiomatization for $\SOL$
and we
prove that $\SOL$  is algebraizable
with respect to the quasi-variety $\mathbb{OL}$ (coinciding with the variety) generated
by the 
algebra $\OL_3$, the 
algebraic reduct 
of Cooper's three-valued
matrix. 
$\mathbb{OL}$ 
turns out to be a discriminator variety
(Theorem~\ref{the:quasi-var-gen}), making $\SOL$ a nearly functionally complete logic
(adding to its language either the constant $\tv$ or $\bv$ makes it functionally complete).
Indeed, we show that $\SOL$ is definitionally equivalent to an expansion of
Da Costa and D'Ottaviano's three-valued logic $\mathcal{J}3$, which is in turn 
an axiomatic extension of paraconsistent 
Nelson logic.
The final Section~\ref{sec:fut} discusses potential future developments,
notably
a more extensive study of the algebraic counterpart of $\SOL$ (especially in connection
with 
the other above-mentioned non-classical logics), and 
the extension of the present approach to other logics definable from the truth tables of 
$\CooOL$ (e.g., 
the algebraizable fragments of $\SOL$,  logics
resulting from alternative choices of designated elements or those determined by
the definable orders of $\OL_3$).


\section{Logical preliminaries}
\label{sec:prelims}
A \emph{propositional signature} $\Sigma$ is a collection of symbols called \emph{connectives}; to each of them is assigned a natural number called
\emph{arity}. Given a countable set $P$ of propositional variables,
the \emph{language over $\Sigma$ generated by $P$} is the absolutely free
algebra over $\Sigma$ freely generated by $P$, denoted $\LangAlgA$. 
Its carrier is denoted by $\LangSetA$ and its elements are called \emph{formulas}.
The collection of
all subformulas of a formula
$\FmA$ is denoted by
$\Subf{\FmA}$.
Similarly, the set of all propositional variables
occurring in $\FmA \in \LangSetA$
is denoted by $\Props{\FmA}$.

A
\emph{single-conclusion logic (over $\Sigma$)}
is a  (non-necessarily structural) consequence relation
$\vdash$ on $\LangSetA$
and a \emph{multiple-conclusion logic (over $\Sigma$)}
is a generalized consequence relation 
$\SetSetCR$ on $\LangSetA$
--- see~\cite[Secs. 1.12,~1.16]{humberstone2011} for
precise definitions.
The \emph{single-conclusion companion}
of a given multiple-conclusion logic~$\SetSetCR$
is the single-conclusion logic $\vdash_{\SetSetCR}$ 
such that $\FmSetA \vdash_{\SetSetCR} \FmB$
if, and only if, $\FmSetA \SetSetCR \{\FmB\}$.
We adopt the convention of
omitting curly braces when writing sets of formulas
in statements
involving (generalized) consequence relations.
The complement of a given
$\SetSetCR$,
i.e., 
$\PowerSet\LangSetA \times\PowerSet\LangSetA{\setminus}\SetSetCR{}$,
will be denoted
by~$\NSetSetCR{}$.

Given a signature $\Sigma$, a \emph{three-valued matrix over $\Sigma$} (or \emph{$\Sigma$-matrix})
is a structure $\MatA \SymbDef \langle \AlgA, D \rangle$,
where $\AlgA$ is an algebra over $\Sigma$ with carrier $O_3 \SymbDef \{ \bv,\uv,\tv \}$, and 
$D \subseteq O_3$ (the set of \emph{designated values}). The algebra $\AlgA$ assigns to each
$k$-ary connective in $\Sigma$ a $k$-ary operation $\ConA_\AlgA : O_3^k \to O_3$, called the
\emph{interpretation} of $\ConA$ in $\MatA$.
For each formula $\FmA(p_1,\ldots,p_k)$ on $k$ propositional variables,
we denote by $\FmA_\AlgA$ the $k$-ary derived operation on $\AlgA$ induced by $\FmA$ in the standard way.
Homomorphisms from $\LangAlgA$ to $\AlgA$ are called \emph{$\MatA$-valuations}.
Every $\MatA$ determines multiple-conclusion and single-conclusion 
substitution-invariant consequence relations
in the following way:
\begin{align*}
    \FmSetA \SetSetCR_{\MatA} \FmSetB
    \text{ iff, for no $\MatA$-valuation $v$, }
    v[\FmSetA] \subseteq D
    \text{ and }
    v[\FmSetB] \subseteq O_3{\setminus}D\\
    \FmSetA \SetFmlaCR_{\MatA} \FmB
    \text{ iff, for no $\MatA$-valuation $v$, }
    v[\FmSetA] \subseteq D
    \text{ and }
    v(\FmB) \in O_3{\setminus}D
\end{align*}
We may also consider proper subsets of $\MatA$-valuations, leading to potentially non-structural consequence relations,
as Cooper did for $\CooOL$.
Following~\cite{Cooper1968}, we consider here the particular class that we call \emph{$b\MatA$-valuations},
comprising those valuations that assign to propositional variables either $\bv$ or $\tv$
(
complex formulas may still be assigned the value $\uv$ even under this restriction, depending on the
interpretations of the connectives).
We thus obtain the following logics:
\begin{align*}
    \FmSetA \SetSetCRBiv_{\MatA} \FmSetB
    \text{ iff, for no b$\MatA$-valuation $v$, }
    v[\FmSetA] \subseteq D
    \text{ and }
    v[\FmSetB] \subseteq O_3{\setminus}D\\
    \FmSetA \SetFmlaCRBiv_{\MatA} \FmB
    \text{ iff, for no b$\MatA$-valuation $v$, }
    v[\FmSetA] \subseteq D
    \text{ and }
    v(\FmB) \in O_3{\setminus}D
\end{align*}
We use the above notions to precisely define what
logics we are interested in here.
Let $\Sigma_{\CooOL} \SymbDef \{ \neg, \lor, \land, \to \}$
and $\Sigma$ be a signature such that
$\Sigma \cap \Sigma_{\CooOL} \neq \varnothing$.
We say that a three-valued $\Sigma$-matrix $\MatA$ is an \emph{$\CooOL$-matrix}
when the connectives in $\Sigma \cap \Sigma_{\CooOL}$ are interpreted as
in Figure~\ref{fig:truth-tables2}
and the set of designated values is $\{ \uv,\tv \}$.
If $\Sigma = \Sigma_{\CooOL}$,
$\SetSetCR{}_{\SOL} \SymbDef \SetSetCR{}_{\MatA}$ and $\SetFmlaCR_{\SOL} \,\SymbDef\, \SetFmlaCR_{\MatA}$ are what
we call 
(multiple-conclusion and single-conclusion, respectively) 
$\SOL$;
similarly, 
$\SetSetCRBiv_{\CooOL} \SymbDef \SetSetCRBiv_{\MatA}$ and
$\SetFmlaCRBiv_{\CooOL} \SymbDef \SetFmlaCRBiv_{\MatA}$ go by the name of
(multiple-conclusion and single-conclusion, respectively) 
$\CooOL$.
If $\Sigma \neq \Sigma_{\CooOL}$ instead,
$\SetSetCR{}_{\MatA}$ and $\vdash_{\MatA}$ are called
a (multiple-conclusion and single-conclusion, respectively) \emph{fragment} of $\SOL$
if $\Sigma \subseteq \Sigma_{\CooOL}$
and an \emph{expansion} (of a fragment of) $\SOL$ in
case $\Sigma {\setminus} \Sigma_{\CooOL} \neq \varnothing$.
In case $\Sigma$ is arbitrary but $\MatA$ interprets
its connectives by functions that are term-definable
from the interpretations of $\Sigma_{\CooOL}$ in Figure~\ref{fig:truth-tables2},
we say more generally that
$\SetSetCR{}_{\MatA}$ and $\vdash_{\MatA}$ are \emph{term-definable fragments}
of $\SOL$. The same terminologies for $\CooOL$ are defined analogously.
{We will be particularly interested
in the following term-definable connectives (see Figure~\ref{fig:truth-tables-deriv} for their truth tables):

\begin{equation*}
\begin{aligned}[c]
\Di  x  & : = \nnot x \to x \\
x \imp y & : = \nnot  x \lor  y  \\
x \supset y & : = \Di  x \imp y  \\
\end{aligned}
\qquad \qquad
\begin{aligned}[c]
x \sqcup y & : =  \nnot \Di  \nnot ( x \lor y) \land ((x \to y) \to \Di  y) \\
x \sqcap y & := \nnot (\nnot x \sqcup \nnot y)\\
    x \lordimpneg y & := (x \Rightarrow y) \Rightarrow ((y \Rightarrow x) \Rightarrow x)
\end{aligned}
\end{equation*}}


\begin{figure}[tbh]
    \centering
    \begin{tabular}{@{}c|ccc@{}}
      \toprule
         $\sqcap$ & \uv & \tv & \bv \\
        \midrule
         \uv & \uv & \uv & \bv\\
         \tv & \uv & \tv & \bv \\
         \bv & \bv & \bv & \bv \\
        \bottomrule
    \end{tabular}
    \quad
    \begin{tabular}{@{}c|ccc@{}}
       \toprule
         $\sqcup$ & \uv & \tv & \bv \\
        \midrule
         \uv & \uv & \tv & \uv\\
         \tv & \tv & \tv & \tv \\
         \bv & \uv & \tv & \bv \\
        \bottomrule
    \end{tabular}
    \quad 
    \begin{tabular}{@{}c|c@{}}
        \toprule
         & $\Di$ \\
         \midrule
         \uv & \uv\\
         \tv & \uv\\
         \bv & \bv\\
         \bottomrule
    \end{tabular}
    \quad
    \begin{tabular}{@{}c|ccc@{}}
     \toprule
         $\supset$ & \uv & \tv & \bv \\
        \midrule
         \uv & \uv & \tv & \bv\\
         \tv & \uv & \tv & \bv \\
         \bv & \tv & \tv & \tv \\
        \bottomrule
    \end{tabular}
    \quad
    \begin{tabular}{@{}c|ccc@{}}
         \toprule
          $\imp$ & \uv & \tv & \bv \\
         \midrule
          \uv & \uv & \tv & \bv \\
          \tv & \bv & \tv & \bv\\
          \bv & \tv & \tv & \tv\\
         \bottomrule
     \end{tabular}
     \quad
     \begin{tabular}{@{}c|ccc@{}}
         \toprule
          $\lordimpneg$ & \uv & \tv & \bv \\
         \midrule
          \uv & \uv & \tv & \tv\\
          \tv & \tv & \tv & \tv \\
          \bv & \tv & \tv & \bv \\
         \bottomrule
     \end{tabular}
\caption{Truth tables of term-definable connectives of $\CooOL$/$\SOL$.}
\label{fig:truth-tables-deriv}
\end{figure}
%
%
Now we present definitions related to Hilbert-style calculi.
A \emph{multiple-conclusion Hilbert calculus} over $\Sigma$ is traditionally defined as a collection $\CalcA$
of \emph{multiple-conclusion rules} of the form $\frac{\FmSetA}{\FmSetB}$,
where $\FmSetA,\FmSetB \subseteq \LangSetA$.
Here we also consider rules denoted by $\frac{\FmSetA}{\FmSetB}[\FmSetD]$ for $\FmSetD\subseteq \Props{\FmSetA \cup \FmSetB}$, to be able to axiomatize 
non-substitution-invariant logics, as we shall soon explain. 
We will often identify
$\frac{\FmSetA}{\FmSetB}$
with
$\frac{\FmSetA}{\FmSetB}[\varnothing]$.
Rules with $\FmSetD \neq \varnothing$
are referred to as
\emph{identity-instance rules}.
When writing rules, we usually omit curly braces from the set notation.
A \emph{proof} of $(\FmSetC,\FmSetD)$ in $\CalcA$ is a finite directed rooted tree 
where each node is
labelled either with a set of
formulas or with the discontinuation symbol $\star$, such that (i) the root is labelled with a superset of $\FmSetC$; (ii) every leaf is labelled
either with a set having non-empty intersection with $\FmSetD$ or with $\star$;
(iii) every non-leaf node has children determined by a substitution instance of a rule of inference of $\CalcA$, in the way we now detail.
A rule instance $\RuleA^\sigma$ applies to a node $\NodeA$
when
the antecedent of $\RuleA^\sigma$ is contained in the label of $\NodeA$;
the application results in $\NodeA$ having
exactly one child node for
each formula $\FmB$ in the succedent of $\RuleA^\sigma$,
which is, in turn, labelled with the same formulas as those of (the label of) $\NodeA$
plus $\FmB$. In case $\RuleA^\sigma$ has an empty
succedent, then $\NodeA$ has a single child node
labelled with $\Star$.
If $\RuleA$ has the form $\frac{\FmSetA}{\FmSetB}$, then any substitution
$\sigma$ can be applied to instantiate it; if it is of the form $\frac{\FmSetA}{\FmSetB}[\FmSetD]$,
however, 
only substitutions $\sigma$ with
$\sigma(p) = p$ for each $p \in \FmSetD$
may be applied.
When we display proof trees, it is common to write as labels only the
formulas introduced by the rule application, instead of the whole accumulated set of formulas.
Examples of multiple-conclusion proofs 
may be found in Figure~\ref{fig:sol-derivations}.

We write $\FmSetC \SetSetCR_{\CalcA} \FmSetD$ whenever there is a proof of $(\FmSetC,\FmSetD)$
in $\CalcA$, and write $\SetFmlaCR_{\CalcA}$ for the single-conclusion companion of $\SetSetCR_{\CalcA}$.
These relations are respectively multiple-conclusion and single-conclusion consequence relations
that are substitution-invariant when no identity-instance rule is present in $\CalcA$.
Single-conclusion Hilbert calculi are just the traditional Hilbert calculi, which
can be seen as multiple-conclusion calculi in which only rules of the form
$\frac{\FmSetA}{\FmB}$ and $\frac{\FmSetA}{\FmB}[\FmSetD]$ are allowed,
where $\FmSetA \subseteq \LangSetA$ and $\FmB \in \LangSetA$.
Derivations then can be seen as linear trees, usually displayed simply as sequences of formulas.
We say that a Hilbert calculus $\CalcA$  (multiple- or single-conclusion) \emph{axiomatizes} $\SetSetCR{}$
if $\SetSetCR{} = \SetSetCR_{\CalcA}$. It axiomatizes $\SetFmlaCR$ in case
$\SetFmlaCR \,=\, \SetFmlaCR_{\CalcA}$.

\section{Hilbert-style axiomatizations for $\SOL$ and $\CooOL$}
\label{sec:axiomatization}
We present now, in a modular way,
multiple- and single-conclusion
Hilbert-style axiomatizations
for $\SOL$ and  $\CooOL$
and  for fragments/expansions
in which $\neg$ is present.

\subsection{Multiple-conclusion}

Every logic determined by a finite 
matrix is finitely axiomatized by a multiple-conclusion calculus~\cite{ss1978}.
Moreover, if the matrix is \emph{monadic},
the calculus satisfies a generalized form of
analyticity and can be effectively generated from the matrix description~\cite{marcelino19woll}.
We now describe in more detail these notions.

A matrix $\MatA \SymbDef \langle \AlgA, D \rangle$ is \emph{monadic} if there is a unary formula
$\Sep(\PropA) \in \LangSetAp$,
sometimes called a \emph{separator}, for each pair of truth values $x, y \in A$, such that
$\Sep_{\AlgA}(x) \in D$ and $\Sep_{\AlgA}(y) \in \ValuesSetComp{A}{D}$
or vice-versa.
A multiple-conclusion Hilbert calculus $\CalcA$ is \emph{$\Theta$-analytic}, for
$\Theta(p)$ a set of unary formulas, whenever $\FmSetA \SetSetCR_\CalcA \FmSetB$
is witnessed by a derivation using only subformulas of
$\FmSetA \cup \FmSetB$ or formulas in $\bigcup_{\FmC \in \Subf{\FmSetA\cup\FmSetB}} \Theta(\FmC)$.
{For example,
if $\Theta(p) = \{ p, \neg p \}$
and $\CalcA$ is $\Theta$-analytic,
checking whether
$r, q \land r \,\SetSetCR_{\CalcA}\, p \to r, \neg q$ amounts to looking only for
derivations in which formulas in
$\{ r, q, p, q \land r, p \to r, \neg q, \neg r, \neg p, \neg(q \land r), \neg(p \to r), \neg\neg q \}$ occur.}
{Because in an OL-matrix
    $p$ separates $(\tv,\bv)$ and $(\uv,\bv)$,
    while $\neg p$ separates $(\tv,\uv)$,
    the following holds.}


\begin{lemma}
    \label{lem:monadicity-of-matrices}
    Any $\CooOL$-matrix over 
    $\Sigma \supseteq \{ \neg \}$
    is monadic, with set of separators $\{ p, \neg p \}$.
\end{lemma}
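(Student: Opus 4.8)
The plan is a direct verification against the definition of monadicity: I must exhibit, for every pair of distinct truth values, a unary formula drawn from $\{p, \neg p\}$ whose interpretation sends one value into $D = \{\uv, \tv\}$ and the other into $\ValuesSetComp{O_3}{D}$. Since the carrier has only three elements, there are exactly three such pairs to handle, and the two candidate separators have completely determined behaviour, so the whole argument reduces to reading off Figure~\ref{fig:truth-tables2}.

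First I would record the interpretations of the two candidates. The formula $p$ induces the identity operation, so $p_\AlgA(\tv) = \tv$ and $p_\AlgA(\uv) = \uv$ both lie in $D$, while $p_\AlgA(\bv) = \bv \in \ValuesSetComp{O_3}{D}$. The formula $\neg p$ induces the negation of Figure~\ref{fig:truth-tables2}, namely $\neg_\AlgA(\tv) = \bv$, $\neg_\AlgA(\bv) = \tv$ and $\neg_\AlgA(\uv) = \uv$; hence $(\neg p)_\AlgA(\uv) = \uv \in D$ and $(\neg p)_\AlgA(\bv) = \tv \in D$, whereas $(\neg p)_\AlgA(\tv) = \bv \in \ValuesSetComp{O_3}{D}$.

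Next I would match separators to pairs. For $(\tv, \bv)$ and $(\uv, \bv)$ the identity $p$ works, since it keeps $\tv$ and $\uv$ designated while sending $\bv$ to the non-designated $\bv$. The only pair not separated by $p$ is $(\tv, \uv)$, as both are designated; here $\neg p$ does the job, mapping $\tv$ to the non-designated $\bv$ while fixing $\uv$ inside $D$. This exhausts all pairs of distinct values and establishes monadicity with separator set $\{p, \neg p\}$.

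Since the lemma only requires $\neg \in \Sigma$ — the remaining connectives play no role — and the interpretation of $\neg$ is fixed for every $\CooOL$-matrix, the argument is uniform across all such matrices. There is essentially no obstacle: the entire content lies in the fact that $\neg$ fixes the gap value $\uv$ while swapping the two classical values, which is precisely what lets $\neg p$ distinguish the two designated values $\tv$ and $\uv$ that the identity cannot tell apart.
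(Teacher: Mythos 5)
Your proof is correct and matches the paper's own justification, which is the one-line remark preceding the lemma: $p$ separates $(\tv,\bv)$ and $(\uv,\bv)$ while $\neg p$ separates $(\tv,\uv)$. You simply spell out the same case check in full detail.
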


Lemma~\ref{lem:monadicity-of-matrices} implies 
that all multiple-conclusion fragments/expansions of 
$\SOL$ containing negation
admit $\{ p, \neg p \}$-analytic multiple-conclusion
Hilbert-style axiomatizations generated from the matrix description
in a modular way, as the next theorem states.
For a detailed presentation of the
calculi generation for
three-valued logics {and for the associated adequacy proofs}, see~\cite[Sec.~5]{threevalchapter}.

\begin{theorem}\label{the:mc-axiomat}
    Let $\Sigma$ be a signature such that $\neg \in \Sigma$
    and
    $\MatA$ be an $\CooOL$-matrix over $\Sigma$.
    Then $\SetSetCR_{\MatA}$
    is axiomatized
    by the $\{ p, \neg p\}$-analytic calculi
    \[
    \CalcA_\Sigma \SymbDef \bigcup_{\ConA \,\in \Sigma} \CalcA_\ConA,
    \]
    where each set $\CalcA_\ConA$, $\ConA \in \Sigma$,
    is generated by the procedure described in~\cite{threevalchapter}.
    For the connectives of $\CooOL$,
    these sets 
    are displayed in Figure~\ref{fig:mc-rules-connectives}.
\end{theorem}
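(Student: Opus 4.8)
The plan is to obtain Theorem~\ref{the:mc-axiomat} as an instance of the general generation-and-adequacy machinery for finite monadic matrices developed in \cite{ss1978,marcelino19woll} (see \cite[Sec.~5]{threevalchapter} for the three-valued case), the only logic-specific input being the monadicity guaranteed by Lemma~\ref{lem:monadicity-of-matrices}. Since $\neg \in \Sigma$, that lemma tells us $\MatA$ is monadic with the fixed separator set $\Theta(p) = \{p, \neg p\}$, a set that does not depend on which other connectives lie in $\Sigma$. The abstract theorem then yields a finite $\Theta$-analytic multiple-conclusion calculus, generated connective-by-connective from the truth tables, that axiomatizes $\SetSetCR_{\MatA}$. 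It remains to identify this abstract output with $\CalcA_\Sigma = \bigcup_{\ConA \in \Sigma} \CalcA_\ConA$ and, for self-containedness, to verify soundness, completeness, and the modular decomposition directly.

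First I would record the separator profiles that drive the whole argument: the value $\tv$ makes $p$ designated and $\neg p$ undesignated, the value $\uv$ makes both $p$ and $\neg p$ designated, and $\bv$ makes $p$ undesignated and $\neg p$ designated. These three profiles are pairwise distinct, so each truth value is pinned down by the pair of separators, exactly as in Lemma~\ref{lem:monadicity-of-matrices}. The generation procedure then, for each $k$-ary $\ConA$ and each row $\ConA_{\AlgA}(x_1,\ldots,x_k) = y$ of its table, emits rules that place the separator instances $p_i, \neg p_i$ of the arguments and the separators of $\ConA(p_1,\ldots,p_k)$ on the premise/conclusion sides so as to exclude precisely the valuation patterns refuting that row; collecting these per connective gives the sets $\CalcA_\ConA$, which for the four connectives of $\CooOL$ are the rules of Figure~\ref{fig:mc-rules-connectives}.

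Soundness is local and routine: each generated rule $\frac{\FmSetA}{\FmSetB}$ transcribes a single line of a truth table, so for any $\MatA$-valuation $v$ with $v[\FmSetA] \subseteq D$ one checks directly that $v$ cannot send all of $\FmSetB$ into $O_3 {\setminus} D$, giving $\SetSetCR_{\CalcA_\Sigma} \,\subseteq\, \SetSetCR_{\MatA}$. Completeness is where the work lies. Assuming $\FmSetC \NSetSetCR{\CalcA_\Sigma} \FmSetD$, I would invoke $\Theta$-analyticity to restrict attention to the finite analytic universe consisting of the subformulas of $\FmSetC \cup \FmSetD$ together with their negations. A standard saturation argument produces a partition $(\FmSetC', \FmSetD')$ of this universe with $\FmSetC \subseteq \FmSetC'$, $\FmSetD \subseteq \FmSetD'$ and $\FmSetC' \NSetSetCR{\CalcA_\Sigma} \FmSetD'$ that is closed in the sense that whenever the premises of an analytic rule instance lie in $\FmSetC'$ some conclusion formula does too. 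One then defines a valuation $v$ by sending each subformula $\FmB$ to the truth value whose separator profile records which of $\FmB, \neg \FmB$ belong to $\FmSetC'$ (the rules for $\neg$ guaranteeing that this profile is always one of the three admissible ones), and checks that $v[\FmSetC] \subseteq D$ while $v[\FmSetD] \subseteq O_3 {\setminus} D$, witnessing $\FmSetC \NSetSetCR{\MatA} \FmSetD$.

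The crux, and the main obstacle, is verifying that this $v$ is a genuine homomorphism, i.e.\ $v(\ConA(\FmB_1,\ldots,\FmB_k)) = \ConA_{\AlgA}(v(\FmB_1),\ldots,v(\FmB_k))$ for every $\ConA \in \Sigma$: one must confirm that closure under the rules of $\CalcA_\ConA$ \emph{forces} the truth-table value of the compound rather than merely being consistent with it, which is precisely the completeness content of the generation procedure of \cite{marcelino19woll} and where monadicity is essential. Because the separator set $\{p, \neg p\}$ is the same for every connective, this homomorphism check decomposes connective-by-connective; this is exactly what licenses the modular union $\CalcA_\Sigma = \bigcup_{\ConA \in \Sigma} \CalcA_\ConA$, with connectives outside $\Sigma$ simply absent from the language. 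The remaining tasks — soundness, the saturation step, and matching the abstract rules against Figure~\ref{fig:mc-rules-connectives} — are mechanical once this core fact is in place.
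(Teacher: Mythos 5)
Your proposal is correct and follows exactly the route the paper takes: the paper gives no explicit proof of this theorem, instead deriving it from Lemma~\ref{lem:monadicity-of-matrices} together with the generation-and-adequacy machinery of~\cite{ss1978,marcelino19woll} as presented in~\cite[Sec.~5]{threevalchapter}, which is precisely the argument (separator profiles, saturation, homomorphism check) you reconstruct.
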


\begin{figure}[tbh!]
 $\CalcA_\neg$
 $$\frac{p}{\neg \neg p}{\mathsf{r}^\neg_1} \qquad \frac{\neg \neg p}{p}{\mathsf{r}^\neg_2}\qquad
 \frac{}{p, \neg p}{\mathsf{r}^\neg_3}$$
 
\rule{\textwidth}{0.2pt}

$\CalcA_\to$
$$\frac{p,p\to q}{q}{\mathsf{r}^\to_1}
\qquad 
\frac{q}{p\to q}{\mathsf{r}^\to_2}
\qquad
\frac{}{p,p\to q}{\mathsf{r}^\to_3}$$

$$\frac{p,\neg(p\to q)}{\neg q}{\mathsf{r}^\to_4}
\qquad 
\frac{\neg q}{\neg(p\to q)}{\mathsf{r}^\to_5}
\qquad
\frac{}{p,\neg(p\to q)}{\mathsf{r}^\to_6}$$

\rule{\textwidth}{0.2pt}

$\CalcA_\lor$
    $$\frac{}{\neg p, p \lor q} {\mathsf{r}^\lor_1}
    \qquad 
    \frac{}{\neg q, p \lor q}{\mathsf{r}^\lor_2} \qquad 
    \frac{\neg(p \lor q)}{\neg p}{\mathsf{r}^\lor_3} \qquad 
    \frac{\neg(p \lor q)}{\neg q}{\mathsf{r}^\lor_4}
    \qquad 
    \frac{\neg(p \lor q), p \lor q}{p}{\mathsf{r}^\lor_5}$$
    $$\frac{\neg(p \lor q), p \lor q}{q}{\mathsf{r}^\lor_6}
    \qquad
    \frac{\neg p, \neg q}{\neg(p \lor q)}{\mathsf{r}^\lor_7}
    \qquad
    \frac{\neg p, p \lor q}{q}{\mathsf{r}^\lor_8} \qquad 
    \frac{\neg q, p \lor q}{p}{\mathsf{r}^\lor_9} \qquad 
    \frac{p \lor q}{p,q}{\mathsf{r}^\lor_{10}} \qquad 
    \frac{p, q}{p \lor q}{\mathsf{r}^\lor_{11}}$$

\rule{\textwidth}{0.2pt}

$\CalcA_\land$
$$\frac{p\land q}{p}{\mathsf{r}^\land_1}
\qquad 
\frac{p\land q}{q}{\mathsf{r}^\land_2}
\qquad
\frac{p,q}{p\land q}{\mathsf{r}^\land_3}$$
$$\frac{p\land q,\neg(p\land q)}{\neg p}{\mathsf{r}^\land_4}\qquad 
\frac{p\land q,\neg(p\land q)}{\neg q}{\mathsf{r}^\land_5}\qquad
\frac{\neg p,\neg q}{\neg(p\land q)}{\mathsf{r}^\land_6}
\qquad
\frac{p}{p\land q,\neg q}{\mathsf{r}^\land_7}
\qquad
\frac{q}{p\land q,\neg p}{\mathsf{r}^\land_8}
$$
%
\caption{Multiple-conclusion rules for the connectives of $\SOL$.}
\label{fig:mc-rules-connectives}
\end{figure}

 For examples
of derivations in the multiple-conclusion calculus
for $\SOL$ obtained from the above theorem, see Figure~\ref{fig:sol-derivations}.
%
The next theorem 
presents axiomatizations (recall, not substitution-invariant) for the corresponding
multiple-conclusion
fragments and expansions of $\CooOL$.

%

\begin{theorem}
    Let $\Sigma$ be a signature such that $\neg \in \Sigma$
    and $\MatA$ be an $\CooOL$-matrix over $\Sigma$.
    Then $\SetSetCRBiv_{\MatA}$ is axiomatized by
    \[\CalcA_\Sigma^{\CooOL} \SymbDef \CalcA_\Sigma \cup \left\{ \frac{p, \neg p}{\varnothing}[p] : p \in P \right\}.\]
\end{theorem}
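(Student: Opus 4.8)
The plan is to prove the two inclusions $\SetSetCRBiv_{\MatA} \subseteq \SetSetCR_{\CalcA_\Sigma^{\CooOL}}$ (completeness) and $\SetSetCR_{\CalcA_\Sigma^{\CooOL}} \subseteq \SetSetCRBiv_{\MatA}$ (soundness) separately, reducing as much as possible to the already-established adequacy of $\CalcA_\Sigma$ for the structural logic $\SetSetCR_{\MatA}$ (Theorem~\ref{the:mc-axiomat}). The only genuinely new rule is the identity-instance rule $\frac{p, \neg p}{\varnothing}[p]$, so the heart of the argument is to check that (i) it is sound for the \emph{bivalent} consequence relation and (ii) in the completeness construction it forces the extracted valuation to assign only classical values to the propositional variables.

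For soundness, first observe that $\SetSetCR_{\MatA} \subseteq \SetSetCRBiv_{\MatA}$, since every b$\MatA$-valuation is in particular an $\MatA$-valuation; hence every rule of $\CalcA_\Sigma$, being sound for $\SetSetCR_{\MatA}$, is a fortiori sound for $\SetSetCRBiv_{\MatA}$. It then suffices to verify the new rule, i.e.\ that $p, \neg p \SetSetCRBiv_{\MatA} \varnothing$: for a b$\MatA$-valuation $v$ we have $v(p) \in \{\bv, \tv\}$, and inspecting the truth table of $\neg$ in Figure~\ref{fig:truth-tables2} gives $v(p) \notin D$ when $v(p) = \bv$ and $v(\neg p) = \bv \notin D$ when $v(p) = \tv$; so $\{v(p), v(\neg p)\} \not\subseteq D$ in either case. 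Here the index $[p]$ is essential: the rule may only be instantiated by substitutions fixing $p$, so it expresses \emph{atomic} explosion and never the unsound scheme $\FmC, \neg \FmC \SetSetCRBiv_{\MatA} \varnothing$ for a compound $\FmC$ that could take the value $\uv$. Soundness of the whole calculus then follows by the usual induction on the proof tree, the only $\Star$-leaves coming from empty-succedent rules whose antecedents cannot be uniformly designated under a b$\MatA$-valuation.

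For completeness we run the standard saturation argument for analytic multiple-conclusion calculi underlying Theorem~\ref{the:mc-axiomat}. Adding the rules $\frac{p, \neg p}{\varnothing}[p]$ preserves $\{p, \neg p\}$-analyticity, as they introduce no formulas beyond the separators $p, \neg p$ guaranteed by Lemma~\ref{lem:monadicity-of-matrices}. Assume $\FmSetA \not\SetSetCR_{\CalcA_\Sigma^{\CooOL}} \FmSetB$; by analyticity we may work inside the finite set $\AnalyticSetA := \Subf{\FmSetA \cup \FmSetB} \cup \{ \neg \FmC : \FmC \in \Subf{\FmSetA \cup \FmSetB}\}$ and extend $(\FmSetA, \FmSetB)$ to a partition $(\TPart, \FPart)$ of $\AnalyticSetA$ with $\FmSetA \subseteq \TPart$, $\FmSetB \subseteq \FPart$, that is unprovable and closed under the rules (every rule instance with antecedent inside $\TPart$ has its succedent meeting $\TPart$, and no empty-succedent instance has its antecedent inside $\TPart$). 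Reading a valuation $v$ off this partition exactly as in the structural completeness proof yields a homomorphism with $v(\FmC) \in D$ iff $\FmC \in \TPart$ for every $\FmC \in \AnalyticSetA$, the connective rule-sets $\CalcA_\ConA$ driving the inductive truth lemma unchanged. The new ingredient is that closure under $\frac{p, \neg p}{\varnothing}[p]$ forbids $\{p, \neg p\} \subseteq \TPart$ for any variable $p$ (such an antecedent with empty succedent would make the pair provable); since a variable receives $\uv$ only when both $p$ and $\neg p$ land in $\TPart$, this leaves $v(p) \in \{\bv, \tv\}$, so $v$ is a b$\MatA$-valuation. Setting variables outside $\AnalyticSetA$ to $\bv$, we obtain a b$\MatA$-valuation with $v[\FmSetA] \subseteq D$ and $v[\FmSetB] \subseteq O_3 \setminus D$, witnessing $\FmSetA \not\SetSetCRBiv_{\MatA} \FmSetB$.

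The main obstacle is not the conceptual shape of the argument, which is inherited from the adequacy of $\CalcA_\Sigma$, but the careful bookkeeping around the identity-instance rules: one must confirm that the saturation and analyticity lemmas of~\cite{ss1978,marcelino19woll,threevalchapter} still apply in the presence of non-substitution-invariant rules, and that these rules interact with the truth lemma only through the single clause excluding $\uv$ at the variables. Once this is in place, the inductive step of the truth lemma for each connective is literally the one from the structural case, and the two inclusions combine to give $\SetSetCRBiv_{\MatA} = \SetSetCR_{\CalcA_\Sigma^{\CooOL}}$.
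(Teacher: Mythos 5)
Your proof is correct and follows essentially the same route as the paper's: the paper likewise treats soundness as immediate and obtains completeness via the standard saturation construction of~\cite[Sec.~5.2]{threevalchapter}, with the identity-instance rules $\frac{p,\neg p}{\varnothing}[p]$ forcing the extracted valuation to assign only classical values to the propositional variables, while the rules of $\CalcA_\Sigma$ guarantee the homomorphism requirement. Your extra bookkeeping (explicit soundness of the new rule, preservation of $\{p,\neg p\}$-analyticity, and the observation that $v(p)=\uv$ corresponds exactly to $\{p,\neg p\}\subseteq\TPart$) merely spells out what the paper leaves implicit.
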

\begin{proof}
    The nontrivial inclusion
    is
    $\SetSetCRBiv_{\MatA} \subseteq \SetSetCR{}_{\CalcA_\Sigma^{\CooOL}}$. 
    Following the standard completeness proofs exemplified
    in~\cite[Sec. 5.2]{threevalchapter},
    from 
    $\FmSetA \NSetSetCR{\CalcA_{\Sigma}^\CooOL} \FmSetB$ 
    we
    construct a valuation $v$
    witnessing $\FmSetA\NSetSetCRBiv{\MatA}\FmSetB$.
    In that construction,
    the rules
    $\frac{p,\neg p}{\varnothing}[p]$
    force that the valuation only 
    assigns values in $\{ \bv, \tv \}$
    to propositional variables
    and the fact that $ \CalcA_{\Sigma}
    \subseteq
    \CalcA_{\Sigma}^\CooOL$ and $\CalcA_\Sigma$ axiomatizes
    $\SetSetCR_{\MatA}$ guarantees the homomorphism 
    requirement on $v$.
\end{proof}

The reader may appreciate the importance of the extra rules in the above theorem by proving
$p \lor (q \to r) \vdash_{\CooOL} p \lor r$,
even though
$p \lor (q \to r) \not\vdash_{\SOL} p \lor r$ (easy to see semantically). 
More examples like this may be found in~\cite{Cooper1968}.


\subsection{Single-conclusion}
\label{ss:single}

When a suitable connective is available, 
a multiple-conclusion calculus 
may be effectively translated into a
single-conclusion calculus for its single-conclusion companion~\cite{ss1978}.
We now define what is a suitable connective
and the
calculi translations.

\begin{definition}\label{def:disj-imp}
    Let $\vdash$ be a single-conclusion logic over $\LangSetA$ and $\ConA$ be a derived connective in $\LangSetA$.
Then $\ConA$ is
    \begin{enumerate}
        \item a \emph{disjunction} in $\vdash$ whenever $\Gamma, \FmA \,\ConA\, \FmB \vdash \FmC$ iff $\Gamma,\FmA \vdash \FmC$ and $\Gamma,\FmB \vdash \FmC$,
    for all $\Gamma \cup \{ \FmA,\FmB,\FmC \} \subseteq \LangSetA$.
        \item an \emph{implication} in $\vdash$ whenever 
        {$\Gamma \vdash \FmA \,\ConA\, \FmB$
        iff
        $\Gamma,\FmA \vdash \FmB$},
    for all $\Gamma \cup \{ \FmA,\FmB \} \subseteq \LangSetA$.
    \end{enumerate}
\end{definition}
\noindent 
{Note that, in standard terminology,
an implication in $\vdash$ is a binary connective
satisfying the Deduction-Detachment Theorem (DDT).}
In what follows, given a set of formulas $\FmSetA$
{and a binary connective $\ArbDisj$}, let $\FmSetA \ArbDisj \FmB \SymbDef \{ \FmA \ArbDisj \FmB \mid \FmA \in \FmSetA \}$
and $\bigoplus \{ \FmA_1,\ldots,\FmA_m \} \SymbDef \FmA_1 \ArbDisj (\FmA_2 \ArbDisj \ldots (\ldots \ArbDisj \FmA_n)\ldots)$.

\begin{definition}\label{def:trans-disj-mc-to-sc}
Let $\mathsf{R}$ be
a multiple-conclusion calculus
{and $\ArbDisj$ be a binary connective}.
We define $\mathsf{R}^{\ArbDisj}$ as the
    single-conclusion calculus
\[
\left\{\frac{p \ArbDisj p}{p},\frac{p}{p \ArbDisj q}, \frac{p \ArbDisj q}{q \ArbDisj p}, \frac{p \ArbDisj (q \ArbDisj r)}{(p \ArbDisj q) \ArbDisj r}\right\} \cup
\left\{\mathsf{r}^\ArbDisj \mid \mathsf{r} \in \mathsf{R}\right\}\]
where $\mathsf{r}^\ArbDisj$
is $\frac{\varnothing}{\bigoplus\FmSetB}[\FmSetD]$
    if $\mathsf{r} = \frac{\varnothing}{\FmSetB}[\FmSetD]$, $\frac{\FmSetA \ArbDisj p_0}{(\bigoplus \FmSetB) \ArbDisj p_0}[\FmSetD]$
    if $\mathsf{r} = \frac{\FmSetA}{\FmSetB}[\FmSetD]$,
    and $\frac{\FmSetA \ArbDisj p_0}{p_0}[\FmSetD]$
    if $\mathsf{r} = \frac{\FmSetA}{\varnothing}[\FmSetD]$,
    for $p_0$ a
    propositional variable
    not occurring in $\RuleA$.
\end{definition}

Now we move to translations when an implication is present.
Let $p_0$ be a propositional variable not occurring in 
$\FmA_1,\ldots,\FmA_m,\FmB_1,\ldots,\FmB_n$
and {$\ArbImp$ be a binary connective}.
Define
$\{ \FmA_1,\ldots,\FmA_m \} \ArbImp \{ \FmB_1,\ldots,\FmB_n \} \SymbDef \FmA_1 \ArbImp (\{ \FmA_2,\ldots,\FmA_m \} \ArbImp \{ \FmB_1,\ldots,\FmB_n \})$,
$\varnothing \ArbImp \{ \FmB_1,\ldots,\FmB_n \} \SymbDef (\FmB_1 \ArbImp p_0) \ArbImp (\varnothing \ArbImp \{ \FmB_2,\ldots,\FmB_{n} \})$
and $\varnothing \ArbImp \varnothing \SymbDef p_0$.
For example, $\{ p, q \} \ArbImp \{ p \land q \} = p \ArbImp (q \ArbImp (((p \land q) \ArbImp p_0 ) \ArbImp p_0))$.

\begin{definition}
Let $\mathsf{R}$ be
a multiple-conclusion calculus
{and $\ArbImp$ be a binary connective}.
We define $\mathsf{R}^{\ArbImp}$ as the
    single-conclusion calculus containing
    all rules and axioms of intuitionistic implication (where $\ArbImp$ is taken as this implication)
    and axioms of the form
    \[
        \frac{}{
            \{ \FmA_1,\ldots,\FmA_m \} \ArbImp \{ \FmB_1,\ldots,\FmB_n \}
        }[\FmSetD] 
    \]
    for each rule $\frac{\FmA_1,\ldots,\FmA_m}{\FmB_1,\ldots,\FmB_n}[\FmSetD]$ belonging to $\CalcA$.
\end{definition}

The following theorem establishes that, when $\ArbDisj$ and $\ArbImp$ are
respectively a disjunction and an implication as previously defined,
the above translations produce (finite) single-conclusion axiomatizations 
from (finite) multiple-conclusion axiomatizations.
{In its original formulation,
rules of the general form $\frac{\FmSetA}{\FmSetB}[\FmSetD]$
were not considered, but their addition does not
invalidate the result.}

\begin{theorem}[{\cite[Thm. 5.37, Lem. 19.20]{ss1978}}]
    \label{the:translate-mc-sc}
    Let $\CalcA$
    be a multiple-conclusion calculus.
        (1) If $\ArbDisj$ is a disjunction in $\vdash_{\CalcA}$, then
            $\vdash_{{\CalcA}} \,=\, \vdash_{\CalcA^{\ArbDisj}}$.
        (2) If $\ArbImp$ is an implication in $\vdash_{\CalcA}$, then
            $\vdash_{{\CalcA}} \,=\, \vdash_{\CalcA^{\ArbImp}}$.
\end{theorem}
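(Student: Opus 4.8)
The plan is to prove each identity by establishing two inclusions: a \emph{soundness} inclusion $\vdash_{\CalcA^{\ArbDisj}} \subseteq \vdash_{\CalcA}$ (resp.\ $\vdash_{\CalcA^{\ArbImp}} \subseteq \vdash_{\CalcA}$), showing the translated calculus derives nothing new, and a \emph{completeness} inclusion in the opposite direction, showing it derives everything. The engine in both cases is the defining clause of the connective from Definition~\ref{def:disj-imp} together with the internalisation of a multiple succedent: when $\ArbDisj$ is a disjunction, $\FmSetA \SetSetCR_{\CalcA} \FmSetB$ yields $\FmSetA \vdash_{\CalcA} \bigoplus \FmSetB$ by iterating the disjunction clause and cut; when $\ArbImp$ is an implication, it yields $\FmSetA, \{ \FmB \ArbImp p_0 : \FmB \in \FmSetB \} \vdash_{\CalcA} p_0$ for a fresh $p_0$, by iterating the DDT and cut. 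Since the substantive combinatorics is exactly that of the cited \cite[Thm.~5.37, Lem.~19.20]{ss1978}, the only genuinely new point is that the identity-instance decorations $[\FmSetD]$ survive the translation, addressed at the end.

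For part~(1), soundness amounts to checking that every axiom and rule of $\CalcA^{\ArbDisj}$ holds in $\vdash_{\CalcA}$. The four structural axioms ($p \ArbDisj p \vdash p$, $p \vdash p \ArbDisj q$, commutativity, associativity) fall out of Definition~\ref{def:disj-imp}(1) by unwinding the biconditional. For a rule $\RuleA^{\ArbDisj}$ coming from $\RuleA = \frac{\FmSetA}{\FmSetB}[\FmSetD]$, I combine $\FmSetA \SetSetCR_{\CalcA} \FmSetB$ with the internalisation above to obtain $\FmSetA \vdash_{\CalcA} \bigoplus \FmSetB$, then adjoin the context variable $p_0$ through the disjunction clause to get $\FmSetA \ArbDisj p_0 \vdash_{\CalcA} (\bigoplus \FmSetB) \ArbDisj p_0$, which is exactly $\RuleA^{\ArbDisj}$ (the empty-antecedent and empty-succedent shapes being the degenerate cases, the latter using that $\FmSetA \SetSetCR_{\CalcA} \varnothing$ forces inconsistency). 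Completeness proceeds by induction on a multiple-conclusion proof tree $\TreeA$ witnessing $\FmSetC \SetSetCR_{\CalcA} \{ \FmD \}$, reconstructing it inside $\CalcA^{\ArbDisj}$. The guiding idea is that when a rule branches a node on a succedent $\{ \FmB_1, \ldots, \FmB_k \}$, one processes $\FmB_1$ on the first branch while carrying the disjunction $\FmB_2 \ArbDisj \cdots \ArbDisj \FmB_k$ of the remaining alternatives in the $p_0$-slot of the translated rules; the commutativity, associativity and idempotency axioms reassociate and merge these contexts, and closing a branch at a $\FmD$-leaf or a $\Star$-leaf eliminates the corresponding disjunct. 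Once every branch is closed the accumulated disjunction collapses to $\FmD$, giving $\FmSetC \vdash_{\CalcA^{\ArbDisj}} \FmD$.

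For part~(2) the translation replaces each rule by an axiom, so the work shifts to the implicational encoding. Unfolding the definition of $\FmSetA \ArbImp \FmSetB$ and applying the DDT repeatedly shows that the theoremhood of $\FmSetA \ArbImp \FmSetB$ in $\vdash_{\CalcA}$ is interderivable with $\FmSetA, \{ \FmB_j \ArbImp p_0 \}_j \vdash_{\CalcA} p_0$, the single-conclusion rendering of the multiple-conclusion rule with a fresh target $p_0$. Soundness of each translated axiom then follows from $\FmSetA \SetSetCR_{\CalcA} \FmSetB$ via the DDT and cut, while the intuitionistic-implication axioms and rules are sound for any $\ArbImp$ satisfying the DDT (modus ponens and the schemes $\FmA \ArbImp (\FmB \ArbImp \FmA)$ and its $\mathsf{S}$-companion all reduce, via the DDT, to instances of reflexivity, monotonicity and cut). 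For completeness I again induct on a multiple-conclusion proof tree, now using the translated axioms as the case-splitting devices and threading derivations of $p_0$ through the branching by modus ponens and the DDT, the fresh target $p_0$ playing the role the disjunctive context played in part~(1).

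The main obstacle is the completeness bookkeeping: in both parts, faithfully matching the branching of the multiple-conclusion tree against single-conclusion rules that carry a context in the $p_0$-slot, and verifying that the structural axioms (disjunction, or intuitionistic implication) genuinely suffice to rearrange that context at every node. This is precisely the content of the cited results, which I may invoke once the connective hypotheses of Definition~\ref{def:disj-imp} are in force. What remains is the extension to identity-instance rules: the translations copy each decoration $[\FmSetD]$ verbatim onto the resulting rule or axiom and introduce only the fresh variable $p_0 \notin \FmSetD$, never substituting for a variable of $\FmSetD$. Hence a substitution is admissible for the translated rule/axiom exactly when it is admissible for $\RuleA$, so the side conditions neither block an intended instance nor license a new one, and the argument of \cite{ss1978} goes through unchanged.
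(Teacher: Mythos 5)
The paper offers no proof of this theorem beyond the citation to \cite[Thm.~5.37, Lem.~19.20]{ss1978} and the one-sentence remark that adding identity-instance rules $\frac{\FmSetA}{\FmSetB}[\FmSetD]$ does not invalidate the result; your proposal likewise defers the combinatorial core to the cited results and supplies the same observation about the $[\FmSetD]$ decorations (fresh $p_0$, side conditions copied verbatim), so it is correct and matches the paper's approach. The additional detail you give — internalising multiple succedents via $\bigoplus\FmSetB$ or via $\{\FmB_j \ArbImp p_0\}_j \vdash p_0$, and the soundness checks for the structural and intuitionistic-implication axioms from Definition~\ref{def:disj-imp} — is a faithful sketch of the standard argument and raises no concerns.
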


It is not hard to check that $\to$ is an implication  in $\SOL$
(see also Section~\ref{sec:alg}); this gives us  the following single-conclusion axiomatizations.

\begin{theorem}
    \label{the:imp-frags-set-fmla}
    Let $\Sigma$ be a signature such that $\{ \neg,\to \} \subseteq \Sigma$,
    and let $\MatA$ be an $\CooOL$-matrix over $\Sigma$. 
    Then $\vdash_\MatA$ is axiomatized by
    $\CalcA_\Sigma^\to$,
    where $\CalcA_\Sigma$ is given as in Theorem~\ref{the:mc-axiomat}.
\end{theorem}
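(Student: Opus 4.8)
The plan is to obtain the statement as an instance of the translation result in Theorem~\ref{the:translate-mc-sc}(2), so the argument factors into an identification of single-conclusion relations and one semantic check. First I would note that $\vdash_\MatA$ is the single-conclusion companion of $\SetSetCR_\MatA$: unravelling the definitions of Section~\ref{sec:prelims}, the condition defining $\FmSetC \SetSetCR_\MatA \{\FmB\}$ (no $\MatA$-valuation sends $\FmSetC$ into $D$ while sending $\FmB$ into $O_3 \setminus D$) is literally the condition defining $\FmSetC \vdash_\MatA \FmB$. Since $\neg \in \Sigma$, Theorem~\ref{the:mc-axiomat} gives $\SetSetCR_\MatA = \SetSetCR_{\CalcA_\Sigma}$; passing to single-conclusion companions on both sides, and recalling that $\vdash_{\CalcA_\Sigma}$ is by definition the companion of $\SetSetCR_{\CalcA_\Sigma}$, I get $\vdash_\MatA = \vdash_{\CalcA_\Sigma}$. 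It therefore remains only to prove $\vdash_{\CalcA_\Sigma} = \vdash_{\CalcA_\Sigma^\to}$.

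The crux is to verify that $\to$ is an implication in $\vdash_{\CalcA_\Sigma}$ in the sense of Definition~\ref{def:disj-imp}(2), i.e.\ that it satisfies the DDT. As the two logics have just been identified, I would check this semantically in $\vdash_\MatA$, reading the table of $\to$ in Figure~\ref{fig:truth-tables2} with $D = \{\uv,\tv\}$. The one fact needed is that $\bv$ is the only non-designated value and that $x \to y = \bv$ holds exactly when $x \in D$ and $y = \bv$. For the direction $\FmSetC, \FmA \vdash_\MatA \FmB \Rightarrow \FmSetC \vdash_\MatA \FmA \to \FmB$, take $v$ with $v[\FmSetC] \subseteq D$; were $v(\FmA \to \FmB) = \bv$, then $v(\FmA) \in D$ and $v(\FmB) = \bv$, contradicting the premiss, so $v(\FmA \to \FmB) \in D$. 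For the converse, assume $\FmSetC \vdash_\MatA \FmA \to \FmB$ and take $v$ with $v[\FmSetC] \subseteq D$ and $v(\FmA) \in D$; then $v(\FmA \to \FmB) \in D$, and since $v(\FmA) \in D$ the table rules out $v(\FmB) = \bv$, so $v(\FmB) \in D$. Both halves of the DDT thus hold.

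With $\to$ confirmed as an implication in $\vdash_{\CalcA_\Sigma}$, Theorem~\ref{the:translate-mc-sc}(2) applies verbatim with $\CalcA \SymbDef \CalcA_\Sigma$ and the generic implication taken to be $\to$, yielding $\vdash_{\CalcA_\Sigma} = \vdash_{\CalcA_\Sigma^\to}$; chaining this with $\vdash_\MatA = \vdash_{\CalcA_\Sigma}$ finishes the proof. I do not expect a real obstacle: the substantive work is already packaged in Theorems~\ref{the:mc-axiomat} and~\ref{the:translate-mc-sc}, and everything specific to $\CooOL$ reduces to the short truth-table observation above. The only point that warrants a moment's care is the bookkeeping that lets the DDT be verified in the matrix semantics and then used for the calculus-defined relation $\vdash_{\CalcA_\Sigma}$; this is exactly what the opening identification $\vdash_\MatA = \vdash_{\CalcA_\Sigma}$ secures.
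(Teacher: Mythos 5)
Your proposal is correct and follows exactly the route the paper intends: the paper's (implicit) proof is precisely the remark preceding the theorem that $\to$ is an implication in $\SOL$ in the sense of Definition~\ref{def:disj-imp}(2), combined with Theorem~\ref{the:mc-axiomat} and Theorem~\ref{the:translate-mc-sc}(2). Your semantic verification of the DDT via the truth table of $\to$ (using that $\bv$ is the unique non-designated value and $x \to y = \bv$ iff $x \in D$ and $y = \bv$) is the ``not hard to check'' step the paper leaves to the reader, and your bookkeeping identifying $\vdash_\MatA$ with $\vdash_{\CalcA_\Sigma}$ is sound.
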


More interestingly,  we may replace $\to$ by either $\land$ or $\lor$,
providing axiomatizations for more fragments of $\SOL$.
Neither $\land$ nor $\lor$ is a disjunction or
an implication in the above sense, but each of them allows us
to define a connective that is a disjunction and thus suitable
for the multiple-conclusion to single-conclusion translation.

\begin{theorem}
    \label{the:no-imp-frags-set-fmla}
    Let $\Sigma$ be a signature such that either $\{ \neg,\land \} \subseteq \Sigma$
    or $\{\neg,\lor \} \subseteq \Sigma$
    and let $\MatA$ be an $\CooOL$-matrix over $\Sigma$. 
    Then $\vdash_{\MatA}$ is axiomatized
    by
    $\CalcA_\Sigma^{\lordimpneg}$,
    where $\CalcA_\Sigma$ is given as in Theorem~\ref{the:mc-axiomat},
    and $p \lordimpneg q := (p \Rightarrow q) \Rightarrow ((q \Rightarrow p) \Rightarrow p)$.
\end{theorem}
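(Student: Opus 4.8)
The plan is to derive the claim as an instance of the multiple-conclusion-to-single-conclusion translation in Theorem~\ref{the:translate-mc-sc}(1), taking $\lordimpneg$ as the required disjunction. First I would record that, by Theorem~\ref{the:mc-axiomat}, the calculus $\CalcA_\Sigma$ axiomatizes the multiple-conclusion logic, i.e.\ $\SetSetCR_{\CalcA_\Sigma} = \SetSetCR_{\MatA}$. Passing to single-conclusion companions and unwinding the definition, the companion $\vdash_{\CalcA_\Sigma}$ of $\SetSetCR_{\MatA}$ is exactly $\vdash_{\MatA}$, since $\FmSetA \SetSetCR_{\MatA} \{\FmB\}$ holds precisely when no valuation designates all of $\FmSetA$ while sending $\FmB$ outside $D$, which is the defining condition of $\FmSetA \vdash_{\MatA} \FmB$. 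Thus it suffices to establish $\vdash_{\CalcA_\Sigma} = \vdash_{\CalcA_\Sigma^{\lordimpneg}}$, and this is the conclusion of Theorem~\ref{the:translate-mc-sc}(1) once we know that $\lordimpneg$ is a disjunction in $\vdash_{\CalcA_\Sigma} = \vdash_{\MatA}$.

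Second, I would confirm that $\lordimpneg$ is a genuine derived connective over $\Sigma$ in both settings, as required by Definition~\ref{def:disj-imp}. Writing $p \imp q := \neg p \lor q$, the term $\lordimpneg$ is built solely from $\imp$, hence from $\neg$ and $\lor$; in the $\{\neg,\lor\}$ case this is immediate, while in the $\{\neg,\land\}$ case I would first recover $\lor$ through the De Morgan identity $p \lor q = \neg(\neg p \land \neg q)$, valid in every $\CooOL$-matrix. So in either case $\lordimpneg$ is a formula of $\LangSetA$, and the same fixed term works uniformly (also when $\Sigma$ contains further connectives).

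Third, the only computational step, I would verify that $\lordimpneg$ is a disjunction. Because $\vdash_{\MatA}$ is matrix-defined, it is enough to check the semantic criterion $x \lordimpneg y \in D$ iff $x \in D$ or $y \in D$ for all $x,y \in O_3$: reading off the truth table in Figure~\ref{fig:truth-tables-deriv}, one has $x \lordimpneg y = \bv$ exactly when $x = y = \bv$, and since $D = \{\uv,\tv\} = O_3 \setminus \{\bv\}$ the criterion follows at once. From this the defining biconditional of a disjunction is immediate: $\Gamma, \FmA \lordimpneg \FmB \vdash_{\MatA} \FmC$ holds iff every valuation designating $\Gamma$ and $\FmA \lordimpneg \FmB$ designates $\FmC$, which by the criterion splits into the two requirements $\Gamma, \FmA \vdash_{\MatA} \FmC$ and $\Gamma, \FmB \vdash_{\MatA} \FmC$.

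With these ingredients, Theorem~\ref{the:translate-mc-sc}(1) applies with $\ArbDisj = \lordimpneg$ and gives $\vdash_{\CalcA_\Sigma} = \vdash_{\CalcA_\Sigma^{\lordimpneg}}$, whence $\vdash_{\MatA} = \vdash_{\CalcA_\Sigma^{\lordimpneg}}$, as desired. There is no serious obstacle here: the conceptual weight is carried entirely by the already-established translation theorem, and the steps requiring the most attention are merely the finite check that the compound $(p \imp q) \imp ((q \imp p) \imp p)$ induces the tabulated operation and the observation that this single term is simultaneously definable and disjunctive in both the $\{\neg,\land\}$- and the $\{\neg,\lor\}$-fragments.
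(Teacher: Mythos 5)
Your proof is correct and follows essentially the same route as the paper's: observe that $\lordimpneg$ is built from $\Rightarrow$, hence definable from $\neg$ and $\lor$ (and from $\neg$ and $\land$ via De Morgan), check from its truth table that it is a disjunction in the sense of Definition~\ref{def:disj-imp}, and invoke Theorem~\ref{the:translate-mc-sc}(1). The paper leaves the semantic criterion and the final appeal to the translation theorem implicit where you spell them out, but the argument is the same.
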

\begin{proof}
    Let 
    $p \lordimpneg q := (p \Rightarrow q) \Rightarrow ((q \Rightarrow p) \Rightarrow p)$
    and recall that $\Rightarrow$ was defined as $\neg p \lor q$ (thus using only $\neg$ and $\lor$).
    Note that it could also have been defined using $\neg$ and $\land$, since $\lor$ is definable
    from $\neg$ and $\land$. 
    From the truth table of $\lordimpneg$ displayed in Figure~\ref{fig:truth-tables2},
    it is easy to see that $\lordimpneg$ is a disjunction in the sense of Definition~\ref{def:disj-imp}.
\end{proof}

We display in Figure~\ref{fig:sc-rules-connectives-disj} 
the axiomatization of $\SOL$ following the previous theorem.

\begin{remark}
    We could have used 
    the connective $\sqcup$ defined before
    instead of $\lordimpneg$.
\end{remark}

\begin{remark}
    Since $\lordimpneg$ was defined using only
     $\Rightarrow$,
    this result (and the next) also
    applies to the  term-definable single-conclusion fragments $\{ \neg,\Rightarrow \}$
    of $\CooOL$ and $\SOL$.
\end{remark}

\begin{figure}[tbh!]
    \centering
    \scalebox{.9}{
    \begin{tikzpicture}[baseline]
        \node (0) {$\varnothing$};
        \node (1) [below left=.5cm and .5cm of 0] {$\FmA\to\FmB$};
        \node (2) [below right=.5cm and .5cm of 0] {$(\FmA\to\FmB) \to \neg(\FmA \to \neg\FmB)$};
        \node (3) [below left=.5cm and .5cm of 1] {$\FmA$};
        \node (4) [below right=.5cm and .5cm of 1] {$\neg(\FmA \to \neg\FmB)$};
        \node (5) [below of=4] {$(\FmA\to\FmB) \to \neg(\FmA \to \neg\FmB)$};
        \node (6) [below of=3] {$\FmB$};
        \node (7) [below of=6] {$\neg\neg\FmB$};
        \node (8) [below of=7] {$\neg(\FmA \to \neg\FmB)$};
        \node (9) [below of=8] {$(\FmA\to\FmB) \to \neg(\FmA \to \neg\FmB)$};
        \draw (0) -- (1) node[midway,above left] {$\mathsf{r}_3^\to$};
        \draw (0) -- (2) node[midway,above left] {};
        \draw (1) -- (3) node[midway,above left] {$\mathsf{r}_6^\to$};
        \draw (1) -- (4) node[midway,above left] {};
        \draw (4) -- (5) node[midway, left] {$\mathsf{r}_2^\to$};
        \draw (3) -- (6) node[midway,left] {$\mathsf{r}_1^\to$};
        \draw (6) -- (7) node[midway, left] {$\mathsf{r}_1^\neg$};
        \draw (7) -- (8) node[midway, left] {$\mathsf{r}_5^\to$};
        \draw (8) -- (9) node[midway, left] {$\mathsf{r}_2^\to$};
    \end{tikzpicture}
    \begin{tikzpicture}[baseline]
        \node (0) {$\varnothing$};
        \node (11) [below left=.5cm and .2cm of 0]  {$\neg\FmA$};
    \node (12) [below right=.5cm and .2cm of 0] {$\neg(\neg\FmA \to \FmA)$};
        \node (22) [below of=11] {$\neg(\neg \FmA \to \FmA)$};
        \draw (0)  -- (11) node[midway,above left] {$\mathsf{r}^\to_6$};
        \draw (0)  -- (12) node[midway,above left] {};
        \draw (11) -- (22) node[midway, left] {$\mathsf{r}^\to_5$};
    \end{tikzpicture}
    }
    \caption{Derivations of Boethius' thesis and Aristotle's thesis in $\SOL$ (see Figure~\ref{fig:mc-rules-connectives}).}
    \label{fig:sol-derivations}
\end{figure}
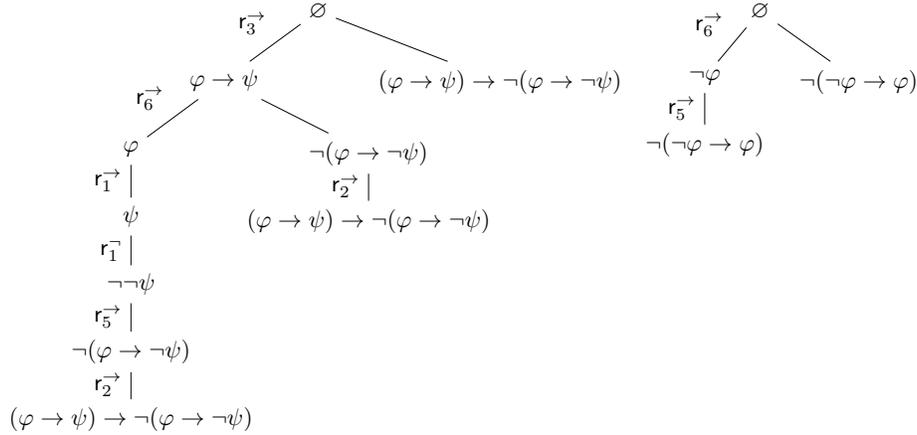

\begin{figure}[tbh!]
 $\CalcA_{\lordimpneg}$
 $$
    \frac{p \lordimpneg p}{p}
    \qquad
    \frac{p}{p \lordimpneg q}
    \qquad
    \frac{p \lordimpneg q}{q \lordimpneg p}
    \qquad
    \frac{p \lordimpneg (q \lordimpneg r)}{(p \lordimpneg q) \lordimpneg r}
 $$
\rule{\textwidth}{0.2pt}

 $\CalcA_\neg$
 $$\frac{p \lordimpneg r}{\neg \neg p \lordimpneg r}\qquad \frac{\neg \neg p \lordimpneg r}{p \lordimpneg r}\qquad
    \frac{}{p \lordimpneg \neg p }$$
    
\rule{\textwidth}{0.2pt}

$\CalcA_\to$
    $$\frac{p \lordimpneg r, (p\to q) \lordimpneg r}{q \lordimpneg r}\qquad 
    \frac{q \lordimpneg r}{(p\to q)\lordimpneg r}\qquad
    \frac{}{p \lordimpneg (p\to q)}$$

    $$\frac{p \lordimpneg r,\neg(p\to q) \lordimpneg r}{\neg q \lordimpneg r}\qquad 
    \frac{\neg q \lordimpneg r}{\neg(p\to q) \lordimpneg r}
    \qquad
    \frac{}{p \lordimpneg \neg(p\to q)}$$

\rule{\textwidth}{0.2pt}

$\CalcA_\lor$
    $$\frac{}{\neg p \lordimpneg (p \lor q)} 
    \qquad 
    \frac{}{\neg q \lordimpneg  (p \lor q)}
    \qquad
    \frac{\neg(p \lor q) \lordimpneg r}{\neg p \lordimpneg r} 
    \qquad 
    \frac{\neg(p \lor q) \lordimpneg r}{\neg q \lordimpneg r}$$

    $$\frac{\neg(p \lor q) \lordimpneg r, (p \lor q) \lordimpneg r}{p \lordimpneg r} 
    \qquad 
    \frac{\neg(p \lor q) \lordimpneg r, (p \lor q) \lordimpneg r}{q \lordimpneg r} 
    \qquad 
    \frac{\neg p \lordimpneg r, \neg q \lordimpneg r}{\neg(p \lor q) \lordimpneg r}$$

    $$\frac{\neg p \lordimpneg r, (p \lor q) \lordimpneg r}{q \lordimpneg r} 
    \qquad 
    \frac{\neg q \lordimpneg r, (p \lor q) \lordimpneg r}{p \lordimpneg r} 
    \qquad 
    \frac{(p \lor q) \lordimpneg r}{(p \lordimpneg q) \lordimpneg r} 
    \qquad 
    \frac{p \lordimpneg r, q \lordimpneg r}{(p \lor q) \lordimpneg r}$$
    
\rule{\textwidth}{0.2pt}

$\CalcA_\land$
    $$\frac{(p\land q) \lordimpneg r}{p \lordimpneg r}\qquad \frac{(p\land q) \lordimpneg r}{q \lordimpneg r}
    \qquad
    \frac{p \lordimpneg r, q \lordimpneg r}{(p\land q) \lordimpneg r}$$

    $$\frac{(p\land q) \lordimpneg r, \neg(p\land q) \lordimpneg r}{\neg p \lordimpneg r}
    \qquad 
    \frac{(p\land q) \lordimpneg r,\neg(p\land q) \lordimpneg r}{\neg q \lordimpneg r}
    $$

    $$
    \frac{\neg p \lordimpneg r,\neg q \lordimpneg r}{\neg(p\land q) \lordimpneg r}
\qquad
    \frac{p \lordimpneg r}{((p\land q) \lordimpneg \neg q) \lordimpneg r}
\qquad
    \frac{q \lordimpneg r}{((p\land q) \lordimpneg \neg p) \lordimpneg r}
$$
%
\caption{Single-conclusion calculus for single-conclusion 
$\SOL$ produced via Definition~\ref{def:trans-disj-mc-to-sc}.
By Theorem~\ref{the:no-imp-frags-set-fmla}, one can modularly add suitable rules
to $\CalcA_{\lordimpneg} \cup \CalcA_\neg \cup \CalcA_\lor$ to axiomatize fragments/expansions of $\SOL$ 
over signatures $\Sigma \supseteq \{ \neg,\lor \}$ ($\lor$ may be replaced by $\land$).}
\label{fig:sc-rules-connectives-disj}
\end{figure}

With the above results, we axiomatized via single-conclusion calculi all single-conclusion fragments/expansions of $\SOL$ containing $\neg$.
For some of them, and for $\SOL$ itself, two calculi were presented. 
The ones obtained from Theorem~\ref{the:imp-frags-set-fmla} have one rule and many axioms,
while the ones from Theorem~\ref{the:no-imp-frags-set-fmla} tend to be rich in rules and have only a few axioms.
We now proceed to extend these calculi to axiomatize the corresponding fragments and expansions of $\CooOL$.

\begin{theorem}
    \label{the:cooper-set-fmla-axiomat}
    Let $\Sigma$ be a signature either with $\{ \neg,\to \} \subseteq \Sigma$, $\{ \neg,\lor \} \subseteq \Sigma$
    or $\{ \neg,\land \} \subseteq \Sigma$, and let $\MatA$ be an $\CooOL$-matrix over $\Sigma$.
    Then $\SetFmlaCRBiv_{\MatA} \,=\, \vdash_{(\CalcA_\Sigma^{\CooOL})^\ConA}$,
    where $\ConA =\, \to$ if $\to \,\in \Sigma$ and $\ConA = \lordimpneg$ otherwise.
\end{theorem}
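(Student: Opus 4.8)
The plan is to obtain the claim by composing two facts already in hand: the multiple-conclusion axiomatization of $\CooOL$ and the multiple-to-single-conclusion translation of Theorem~\ref{the:translate-mc-sc}. First I would apply the theorem stating that $\CalcA_\Sigma^{\CooOL}$ axiomatizes $\SetSetCRBiv_{\MatA}$; passing to single-conclusion companions on both sides yields $\vdash_{\CalcA_\Sigma^{\CooOL}} \,=\, \SetFmlaCRBiv_{\MatA}$. It then remains to prove $\vdash_{\CalcA_\Sigma^{\CooOL}} \,=\, \vdash_{(\CalcA_\Sigma^{\CooOL})^\ConA}$, which is exactly the content of Theorem~\ref{the:translate-mc-sc}, provided we check that $\ConA$ is an implication (when $\ConA = \to$) or a disjunction (when $\ConA = \lordimpneg$) in the single-conclusion companion $\vdash_{\CalcA_\Sigma^{\CooOL}} = \SetFmlaCRBiv_{\MatA}$, in the sense of Definition~\ref{def:disj-imp}.

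The delicate point, and where I expect essentially all the work to lie, is that earlier these properties were verified only for the structural logic $\SOL$, whereas here Theorem~\ref{the:translate-mc-sc} must be applied to the non-structural $\CooOL$. The key observation is that, for a matrix-defined consequence relation, being a disjunction or an implication reduces to a pointwise condition on the truth tables that does not refer to which class of valuations one quantifies over; no appeal to substitution-invariance is needed, so restricting from arbitrary valuations to $b\MatA$-valuations preserves it. Concretely, the truth table of $\to$ gives that $v(\FmA) = \bv$ or $v(\FmB) \in \{\uv,\tv\}$ forces $v(\FmA \to \FmB) \in \{\uv,\tv\}$, while $v(\FmA),\, v(\FmA \to \FmB) \in \{\uv,\tv\}$ forces $v(\FmB) \in \{\uv,\tv\}$; these two facts deliver both directions of the Deduction--Detachment Theorem for any class of valuations, so $\to$ is an implication in $\SetFmlaCRBiv_{\MatA}$. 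Likewise, the truth table of $\lordimpneg$ shows that $v(\FmA \lordimpneg \FmB) \notin \{\uv,\tv\}$ exactly when $v(\FmA) = v(\FmB) = \bv$, i.e.\ $v(\FmA \lordimpneg \FmB)$ is designated iff $v(\FmA)$ or $v(\FmB)$ is, which is precisely the semantic meaning of $\lordimpneg$ being a disjunction, and this too is inherited by the $b\MatA$-valuations.

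With these verifications the theorem follows by a case split. If $\to \in \Sigma$ we take $\ConA = \to$ and invoke part~(2) of Theorem~\ref{the:translate-mc-sc}; otherwise $\{ \neg,\lor \} \subseteq \Sigma$ or $\{ \neg,\land \} \subseteq \Sigma$, so $\lordimpneg$ is term-definable (recall $\lor$ is itself definable from $\neg$ and $\land$ via De Morgan), and we take $\ConA = \lordimpneg$ and invoke part~(1). Either way we conclude $\SetFmlaCRBiv_{\MatA} \,=\, \vdash_{\CalcA_\Sigma^{\CooOL}} \,=\, \vdash_{(\CalcA_\Sigma^{\CooOL})^\ConA}$. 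The one bookkeeping matter to record is that $\CalcA_\Sigma^{\CooOL}$ contains the identity-instance rules $\frac{p,\neg p}{\varnothing}[p]$; since the translation constructions carry the tag $[\FmSetD]$ through unchanged and, as noted after Theorem~\ref{the:translate-mc-sc}, the translation remains valid in their presence, this causes no difficulty. The main obstacle is thus conceptual rather than computational: recognizing that the disjunction/implication properties are robust under the valuation restriction that turns $\SOL$ into the non-structural $\CooOL$.
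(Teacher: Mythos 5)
Your proof is correct and follows exactly the route the paper intends: the paper in fact omits the proof of this theorem entirely, treating it as an immediate consequence of the multiple-conclusion axiomatization of $\CooOL$ together with Theorem~\ref{the:translate-mc-sc}, which is precisely your decomposition. The one point you make explicit that the paper glosses over --- that $\to$ being an implication and $\lordimpneg$ being a disjunction are pointwise truth-table conditions (designation of $\FmA \to \FmB$, resp.\ $\FmA \lordimpneg \FmB$, determined by designation of $\FmA$ and $\FmB$) and hence survive the restriction from all valuations to $b\MatA$-valuations --- is verified correctly and is exactly the right justification for applying the translation theorem to the non-structural $\CooOL$ rather than only to $\SOL$.
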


\section{Algebraic semantics}
\label{sec:alg}
We denote by
$\OL_3$
the three-element algebra
whose operation tables are given in Figure~\ref{fig:truth-tables2}, 
viewed as an algebra in the language $\{ \land, \lor, \to, \nnot \}$.
Denote by $\mathbb{OL}$ the quasi-variety generated by $\OL_3$ (as we shall prove,
 $\mathbb{OL}$ is in fact a variety), and let
$\vDash_{\mathbb{OL}}$ denote the corresponding relative equational consequence relation.


\paragraph{Algebraizability of $\SOL$.}
%
{The matrix semantics
of $\SOL$
makes
it easy to 
check that $\SOL$ is algebraizable in the sense of Blok and Pigozzi~\cite{BP89}.}  
In what follows, 
abbreviate 
$\abs{\alpha} := \alpha \imp \alpha$.

\begin{theorem}
\label{thm:olisalg}
    {
    $\SOL$}
    is
    algebraizable  with translations
$\tau \colon x \mapsto x \approx \abs{x} $
and
$\rho \colon \phi \approx \psi \mapsto 
\{ \phi \imp \psi, \psi \imp \phi 
\}$. (Alternatively, one may take
$\tau \colon x \mapsto x \approx x \to x$
and
$\rho \colon \phi \approx \psi \mapsto 
\{ \phi \to \psi, \psi \to \phi, \nnot \phi \to \nnot \psi, \nnot \psi \to \nnot \phi 
\}$.)
\end{theorem}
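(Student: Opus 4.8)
The plan is to verify the Blok--Pigozzi conditions~\cite{BP89} directly on the defining matrix. By definition $\SOL$ (in its single-conclusion guise $\SetFmlaCR_{\SOL}$) is the logic $\SetFmlaCR_{\MatA}$ of the matrix $\MatA = \langle \OL_3, D \rangle$ with $D = \{\uv,\tv\}$, and $\vDash_{\mathbb{OL}}$ is the equational consequence of the quasi-variety generated by $\OL_3$; thus both relations are generated by the single finite algebra $\OL_3$. Because of this, the (in principle infinitary) algebraizability conditions reduce to two pointwise statements that can be checked on $\OL_3$ alone:
\begin{enumerate}
\item[(A)] for every $a \in O_3$, the equation $a \approx \abs{a}$ holds in $\OL_3$ if and only if $a \in D$;
\item[(B)] for all $a,b \in O_3$, one has $a = b$ if and only if both members of $\rho(a,b) = \{ a \imp b,\ b \imp a \}$ are designated.
\end{enumerate}

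First I would establish (A). Evaluating $\abs{x} = x \imp x = \nnot x \lor x$ with the help of Figures~\ref{fig:truth-tables2} and~\ref{fig:truth-tables-deriv} gives $\abs{\uv} = \uv$, $\abs{\tv} = \tv$ and $\abs{\bv} = \tv$; hence $a = \abs{a}$ exactly for $a \in \{\uv,\tv\} = D$, so $\tau$ equationally defines the set of designated values. Next I would establish (B) by the nine-entry inspection of the $\imp$-table in Figure~\ref{fig:truth-tables-deriv}: both $a \imp b$ and $b \imp a$ are designated precisely on the diagonal, each off-diagonal pair yielding a non-designated value in at least one direction. Thus $\rho$ defines the identity relation on $\OL_3$.

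From (B) I would read off that $\rho$ is a set of equivalence formulas for $\SOL$: reflexivity, symmetry, transitivity, the replacement rules for each connective of $\Sigma_{\CooOL}$, and the detachment rule $p, \rho(p,q) \vdash q$ all hold on $\OL_3$, since in each case designatedness of the premises forces the relevant arguments to be equal, and the identity is trivially a congruence. From (A) I would obtain that $\tau$ defines the truth filter. Together these yield the two intrinsic Blok--Pigozzi conditions, namely $\phi \dashv\vdash \rho(\tau(\phi))$ (that is, $\phi \dashv\vdash \{\phi \imp \abs{\phi},\ \abs{\phi} \imp \phi\}$) and the mutual derivability of $\epsilon \approx \delta$ with $\tau(\rho(\epsilon \approx \delta))$ in $\vDash_{\mathbb{OL}}$; hence $\SOL$ is algebraizable. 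Moreover (B) says the Leibniz congruence of $\MatA$ is the identity, so $\MatA$ is reduced with algebra reduct $\OL_3$, and therefore the equivalent algebraic semantics is exactly the quasi-variety $\mathbb{OL}$ generated by $\OL_3$.

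The alternative pair is settled by the same two checks. For $\tau \colon x \mapsto x \approx x \to x$ one computes $\uv \to \uv = \uv$, $\tv \to \tv = \tv$ and $\bv \to \bv = \uv$ from Figure~\ref{fig:truth-tables2}, so again $x = x \to x$ iff $x \in D$. For the four-formula $\rho$ built from $\to$ and $\nnot$, the negated directions are genuinely needed: $\to$ on its own fails to define the identity (for instance $\uv \to \tv$ and $\tv \to \uv$ are both designated although $\uv \neq \tv$), whereas adding $\nnot\phi \to \nnot\psi$ and $\nnot\psi \to \nnot\phi$ separates every off-diagonal pair, as a short case analysis confirms. The only step requiring genuine care, rather than routine table-checking, is the reduction of the Blok--Pigozzi conditions to (A) and (B): it rests on the fact that interderivability in $\SOL$ and inter-consequence in $\vDash_{\mathbb{OL}}$ may both be tested on the single generating algebra $\OL_3$, together with the observation that $\MatA$ is reduced, so that the equivalent quasi-variety is $\mathbb{OL}$ itself and not a proper quotient.
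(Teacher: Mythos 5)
Your proposal is correct and follows essentially the same route as the paper: both arguments reduce algebraizability to finite table checks on $\OL_3$, namely that $\tau$ equationally defines the designated set $\{\uv,\tv\}$ and that $\rho$ holds (is designated) exactly on the diagonal. The only difference is packaging --- the paper verifies the pair (ALG1)+(ALG4) of \cite[Def.~3.11]{F16} while you verify the dual set of conditions (equivalence formulas plus $\phi \dashv\vdash \rho(\tau(\phi))$, together with reducedness of the matrix to identify the equivalent quasi-variety as $\mathbb{OL}$) --- but the underlying computations, including the values $\abs{\uv}=\uv$, $\abs{\tv}=\abs{\bv}=\tv$ and the separation of off-diagonal pairs, coincide with the paper's.
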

\begin{proof}
{Observe (using Figure~\ref{fig:truth-tables-deriv}) that a valuation $v$ over the matrix of $\SOL$ satisfies
an equation $\FmA \approx \FmA \imp \FmA$
iff $v(\FmA) \in \{ \uv,\tv\}$.
Thus,
for all formulas $\Gamma, \phi$,
we have 
$\Gamma \vdash_{\SOL} \phi$ 
iff $\tau(\Gamma) \vDash_{\mathbb{OL}} \tau(\phi)$.}
This is condition (ALG1) of 
algebraizability~\cite[Def.~3.11]{F16}.
To establish algebraizability, it remains to prove (ALG4), i.e.,~that every equation $\phi \approx \psi$ 
is inter-derivable in  $\vDash_{\mathbb{OL}}$ with  $\tau (\rho (\phi \approx \psi))$, which is the set
$\{ \phi \imp \psi \approx \abs{\phi \imp \psi},  \psi \imp \phi \approx \abs{\psi \imp \phi} \}$. 
This is  easily verified in $\OL_3$.
\end{proof}




\paragraph{Algebraic counterpart of  $\SOL$.}

By inspection, $\OL_3$ (Figure~\ref{fig:truth-tables2}) suggests that:
\begin{enumerate}
\item The tables of $\sqcap, \sqcup$ are precisely those of the conjunction and disjunction of 
strong Kleene logic and of G.~Priest's Logic of Paradox,
both defined over the language $\{ \sqcap, \sqcup, \nnot \}$ (see~\cite{threevalchapter} for 
further background on these logics). 

\item Thus, the Logic of Paradox (which also  has $\{\uv, \tv\}$ as designated set, whereas
strong Kleene has $\{ \tv\}$ alone) may be viewed as a definable subsystem
of $\SOL$. 

\item If we consider the language $\{ \sqcap, \sqcup, \supset, \nnot  \}$ then we have the connectives
of Da Costa and D'Ottaviano's three-valued logic $\mathcal{J}3$ (whose designated
set is also $\{\uv, \tv\}$) minus the truth constants. 
These, which are otherwise not definable in $\SOL$, 
need to be 
explicitly included
 in the language; 
we may then define the modal operator of $\Di$ of $\mathcal{J}3$ by $\Di x :  = x \land \tv$.

\item We note that, if we add either $\tv$ or $\bv$ to the language of $\SOL$, then every possible three-valued connective
becomes definable. This is a consequence of the observation that 
$\OL_3$ then becomes a  primal algebra
(Theorem~\ref{the:quasi-var-gen}).
\end{enumerate}

The following proposition is also a matter of straightforward computations.

\begin{proposition}
\label{prop:ol3asn4}
Let $\OL_3 = \la O_3; \land, \lor, \to, \nnot \ra $
be endowed with the above-defined operations (Figure~\ref{fig:truth-tables-deriv}). 
\begin{enumerate}
\item $ \la O_3; \land, 
 \uv, \bv \ra $
is a meet semilattice with 
$\uv$ as maximum and $\bv$ as minimum. 

\item $ \la O_3; \lor, \tv, \uv \ra $
is a join semilattice with 
$\tv$ as maximum and $\uv$ as minimum.

\item $ \la O_3; \sqcap, \sqcup, \tv, \bv 
\ra $ is a 
lattice
with $\tv$ 
 as maximum and $\bv$ as minimum. 



\end{enumerate}
\end{proposition}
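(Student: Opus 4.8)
The plan is to treat each of the three reducts as a \emph{chain} and to verify, directly from the tables, that the listed operation computes the minimum (respectively, the maximum) with respect to a suitable total order on $\{\bv,\uv,\tv\}$. Once this identification is made, the semilattice and lattice laws—commutativity, associativity, idempotence, and (in the third case) absorption—come for free from the elementary theory of chains, and the named constants are read off as the top and bottom of the order. So the whole proposition reduces to matching the tables against three linear orders, with no deeper structural argument required.

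First, for item~(1), I would read off from the $\land$-table of Figure~\ref{fig:truth-tables2} that $\land$ is the minimum operation of the total order $\bv < \tv < \uv$: it suffices to check $\uv \land \tv = \tv$, $\tv \land \bv = \bv$, and $\uv \land \bv = \bv$, so that $x \land y$ is always the lesser of $x$ and $y$ in this chain. Since the minimum of any chain is a commutative, associative, idempotent operation whose induced order $x \leq y \iff x \land y = x$ is the chain itself, $\langle O_3; \land \rangle$ is a meet semilattice, and the chain exhibits $\uv$ as maximum and $\bv$ as minimum, as claimed.

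For item~(2), the same recipe applies with $\lor$ playing the role of the maximum operation, but now for a \emph{different} total order $\uv < \bv < \tv$: one verifies $\uv \lor \bv = \bv$, $\bv \lor \tv = \tv$, and $\uv \lor \tv = \tv$, so that $x \lor y$ is the greater of $x$ and $y$ in this chain. Hence $\langle O_3; \lor \rangle$ is a join semilattice with top $\tv$ and bottom $\uv$. The one point deserving emphasis—and the only real subtlety—is that the semilattice orders of~(1) and~(2) are genuinely distinct total orders (they disagree on the placement of $\uv$), which is precisely why $\land$ and $\lor$ cannot be merged into a single lattice; this is consistent with the failure of the absorption and distributive laws noted earlier in the paper.

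Finally, for item~(3), I would check that on the common chain $\bv < \uv < \tv$ the operation $\sqcap$ computes the minimum and $\sqcup$ the maximum, using Figure~\ref{fig:truth-tables-deriv} (for instance $\uv \sqcap \tv = \uv$, $\uv \sqcup \tv = \tv$, $\bv \sqcap \uv = \bv$, $\bv \sqcup \uv = \uv$). Because the minimum and maximum of \emph{one and the same} total order automatically satisfy the absorption laws and induce the same order, $\langle O_3; \sqcap, \sqcup \rangle$ is a lattice—indeed the three-element chain—with maximum $\tv$ and minimum $\bv$. I do not expect a genuine obstacle here; the only thing requiring care is keeping the three distinct chains apart and not conflating the classical order of truth values with the order relevant to each reduct.
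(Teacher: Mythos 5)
Your verification is correct: each operation is indeed the minimum (resp.\ maximum) of the chain you identify ($\bv<\tv<\uv$ for $\land$, $\uv<\bv<\tv$ for $\lor$, and $\bv<\uv<\tv$ for $\sqcap,\sqcup$), and this is exactly the ``straightforward computation'' the paper leaves implicit. Your observation that the first two orders differ, explaining the failure of absorption, is a nice bonus consistent with the paper's earlier remarks.
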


For all unexplained universal algebraic terminology, we refer the reader to~\cite{BuSa00}.

\begin{theorem}
    \label{the:quasi-var-gen}
    Denote by $V(\OL_3)$ 
    the variety 
    generated by $\OL_3$. 
\begin{enumerate}
\item $V(\OL_3)$ is  both congruence-distributive and congruence-permutable (i.e., it is an arithmetical variety).
\item $V(\OL_3) = \mathbb{OL} 
$.
\item  $\OL_3$ is quasi-primal,  hence 
$\mathbb{OL}$
is a discriminator variety.
\item If we add either $\bv$ or $\tv$ as a constant to $\OL_3$, then the latter becomes a primal algebra
(where every $n$-ary function for $n\geq 1$ is representable by a term). 
\end{enumerate}
\end{theorem}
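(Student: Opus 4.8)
The plan is to treat part~3 as the engine: a single discriminator term yields quasi-primality, and parts~1,~2 and~4 then fall out of standard universal-algebraic machinery (for which I would refer throughout to~\cite{BuSa00}). First I would exhibit a ternary term of $\OL_3$ realizing the \emph{ternary discriminator} ($t(x,y,z)=z$ if $x=y$, and $t(x,y,z)=x$ otherwise). The construction rests on three facts readable off Figures~\ref{fig:truth-tables2} and~\ref{fig:truth-tables-deriv}: $\Di$ detects designatedness ($\Di x=\uv$ for $x\in\{\uv,\tv\}$ and $\Di\bv=\bv$); the term $E(x,y):=(x\imp y)\sqcap(y\imp x)$ returns $\bv$ off the diagonal and a designated value on it, so it is non-designated exactly when $x\neq y$; and, crucially, $\uv$ is a neutral element for $\lor$ (one has $a\lor\uv=\uv\lor a=a$ for all $a$). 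Setting
\[
t(x,y,z):=\big(\Di E(x,y)\to z\big)\lor\big(\nnot E(x,y)\to x\big),
\]
the left disjunct is $z$ when $x=y$ and $\uv$ when $x\neq y$, while the right disjunct is $\uv$ when $x=y$ and $x$ when $x\neq y$; neutrality of $\uv$ collapses the join to the discriminator (checked on the $27$ triples). By Pixley's theorem the existence of such a term \emph{is} quasi-primality, giving part~3; consequently $\mathbb{OL}=V(\OL_3)$ is a discriminator variety.

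Part~1 is then immediate, since a discriminator term supplies a Pixley term, whence $V(\OL_3)$ is congruence-permutable and congruence-distributive, i.e.\ arithmetical. For part~2 the nontrivial inclusion is $V(\OL_3)\subseteq\mathbb{Q}(\OL_3)=\mathbb{OL}$ (the reverse holding for any quasivariety). Because $V(\OL_3)$ is congruence-distributive, Jónsson's lemma gives $V(\OL_3)_{\mathrm{SI}}\subseteq\mathbb{HS}(\OL_3)$, using that ultrapowers of the finite $\OL_3$ are isomorphic to $\OL_3$. The subalgebras of $\OL_3$ are $\OL_3$ itself and the one-element $\{\uv\}$, and the discriminator term forces $\OL_3$ to be simple; hence the only nontrivial subdirectly irreducible in $V(\OL_3)$ is $\OL_3$. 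Birkhoff's subdirect representation then places every member of $V(\OL_3)$ in $\mathbb{ISP}(\OL_3)=\mathbb{Q}(\OL_3)$, closing the loop.

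For part~4 I would invoke that a finite quasi-primal algebra with no proper nontrivial subalgebra and trivial automorphism group is primal. The term $t$ above uses only $\land,\lor,\to,\nnot$, so it survives adjoining $\tv$ (or $\bv$), keeping the expansion quasi-primal; and $\{\uv\}$ is no longer a subuniverse, since the subuniverse generated by $\tv$ is all of $O_3$ (e.g.\ $\bv=\nnot\tv$ and $\uv=\bv\to\tv$). Finally $\OL_3$ has no nontrivial automorphism: any automorphism fixes the unique $\nnot$-fixpoint $\uv$, and the only remaining candidate, the transposition of $\tv$ and $\bv$, cannot preserve $\land$ — from $\tv\land\bv=\bv$ it would force $\bv\land\tv=\tv$, which is false. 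The hard part is genuinely the first step, pinning down the discriminator term: the one-element subalgebra $\{\uv\}$ obstructs any Boolean equality test, because every term operation fixes the all-$\uv$ tuple, so the construction cannot imitate a classical characteristic function and must instead be routed through the neutrality of $\uv$ for $\lor$.
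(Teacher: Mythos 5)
Your proposal is correct, and I have checked your discriminator term against the tables: with $E(x,y)=(x\imp y)\sqcap(y\imp x)$ one indeed gets $E(x,y)=\bv$ iff $x\neq y$ and a designated value otherwise, $\Di$ sends designated values to $\uv$ and $\bv$ to $\bv$, $\uv\to z=z$, $\tv\to x=x$, $\bv\to z=\uv$, and $\uv$ is neutral for $\lor$, so $t(x,y,z)=(\Di E(x,y)\to z)\lor(\nnot E(x,y)\to x)$ is the ternary discriminator on $\OL_3$. Your route is genuinely different from ours. We never write down a discriminator term: we first get congruence-distributivity from the term-definable lattice $\la O_3;\sqcap,\sqcup\ra$ of Proposition~\ref{prop:ol3asn4} and congruence-permutability from an explicit Maltsev term, prove $V(\OL_3)=\mathbb{OL}$ via the J\'onsson-style criterion $HS(\OL_3)\subseteq IS(\OL_3)$, and only then obtain quasi-primality from the \emph{other} half of Pixley's characterization (arithmetical plus hereditarily simple), with primality in item~4 cited from~\cite[Cor.~IV.10.8]{BuSa00}. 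You invert the logical order, making item~3 the engine: the explicit discriminator term gives quasi-primality directly, doubles as a Pixley term for item~1, feeds J\'onsson's lemma for item~2, and reduces item~4 to checking that the expansion has no proper subalgebras and that $\mathrm{Aut}(\OL_3)$ is trivial (both of which you verify correctly). What your approach buys is a concrete, reusable discriminator term and a self-contained proof of simplicity; what ours buys is brevity, since the abstract characterization spares the $27$-triple verification. One small caution on phrasing: in item~4 the criterion requires \emph{no proper subalgebras at all} (a one-element subuniverse already blocks primality, precisely because term operations fix it, as you yourself observe at the end); your statement of the criterion says ``no proper nontrivial subalgebra,'' but your actual verification --- that the subuniverse generated by the new constant is all of $O_3$ --- establishes the correct, stronger condition, so nothing is lost.
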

\begin{proof}
1. Congruence-distributivity follows from the observation that $\OL_3$ has a term-definable
lattice structure (item (iii) of Proposition~\ref{prop:ol3asn4}). Congruence-permutability is witnessed by the Maltsev term $p(x,y,z)$ defined as follows (cf.~\cite[Thm.~4.10]{QN4}):
$
p(x,y,z)  := (((x \imp y ) \sqcap (z \imp z) ) \imp z) \sqcap (((z \imp y ) \sqcap (x \imp x) ) \imp x)
$.

2. Since $V(\OL_3)$ is congruence-distributive, by item (iii) of~\cite[Thm.~3.6]{ClDa98}) it suffices to verify
that $HS(\OL_3) \subseteq IS(\OL_3)$, which is very easy  ($\OL_3$ has only one proper subalgebra with $\{\uv\}$ as universe, and no non-trivial homomorphic images).

3. Taking into account  item (i) and the fact that  $\OL_3$ is hereditarily simple, apply Pixley's characterization~\cite[Thm.~IV.10.7]{BuSa00} to conclude that $\OL_3$ is quasiprimal. 

4. If we further add one of the non-definable constants ($\bv$ or $\tv$) to $\OL_3$, then by~\cite[Cor.~10.8]{BuSa00}
we obtain a primal algebra.
\end{proof}

\paragraph{Axiomatizing $\mathbb{OL}$.}
{The algebraizability result (Theorem~\ref{thm:olisalg}) can be used to obtain a presentation of the quasi-variety
$\mathbb{OL}$ in the standard way
(see~\cite[Prop.~3.44]{F16}), as well as of the subreducts of $\mathbb{OL}$
corresponding to algebraizable fragments of $\SOL$  (e.g.~those capable of expressing either $\imp$ or $\to$ and $\neg$,
relying on the axiomatizations
obtained in Theorems \ref{the:imp-frags-set-fmla}--\ref{the:cooper-set-fmla-axiomat}).
Moreover, one may obtain a 
more standard Hilbert presentation for
$\SOL$
by 
directly proving that the logic determined by the following calculus is
algebraizable (with the same translations $\tau, \rho$ of Theorem~\ref{thm:olisalg}), and that its 
equivalent semantics is
$\mathbb{OL}$. } This is straightforward, but requires a number of derivations that we
omit due to space limitations. 
The calculus is given by the
following axiom schemata, with \emph{modus ponens} 
(from $\phi$ and $\phi \to \psi$ infer $\psi$)
as the only inference rule
(we abbreviate $\alpha \leftrightarrow \beta := (\alpha \to \beta) \land (\beta \to \alpha)$):
\begin{enumerate}[(HOL1)]
\item \qquad \label{Itm:HOL1} $\phi \to (\psi \to \phi)$
\item \qquad \label{Itm:HOL2} $(\phi \to (\psi \to \gamma)) \to ((\phi \to \psi) \to (\phi \to \gamma))$
\item \qquad \label{Itm:HOL3} $((\phi \to \psi) \to  \phi) \to \phi $
\item \qquad \label{Itm:HOL4} $(\phi \land \psi) \to \phi$
\item \qquad \label{Itm:HOL5} $(\phi \land \psi) \to \psi$
\item \qquad  \label{Itm:HOL6} $(\phi \to \psi) \to ((\phi \to \gamma) \to (\phi \to (\psi \land\gamma)))$
\item \qquad \label{Itm:HOL7} $\nnot \nnot \phi \leftrightarrow \phi$
\item \qquad \label{Itm:HOL7b} $ (\phi \to \nnot \phi)  \to \nnot \phi$
\item \qquad  \label{Itm:HOL9} $ ((\phi \to \nnot \psi) \land (\psi \to \nnot \phi)) \leftrightarrow  \nnot (\phi \land \psi) $.
\item \qquad \label{Itm:HOL8} $\nnot (\phi \to \psi) \leftrightarrow (\phi \to \nnot \psi)$
\end{enumerate}

\sloppy
(The `H' in (HOL\ref{Itm:HOL1}) etc.~refers to `Hilbert'.)
We note that the only non-classically valid scheme is (HOL\ref{Itm:HOL8}), while
(HOL\ref{Itm:HOL1})--(HOL\ref{Itm:HOL6})
constitute, 
with 
\emph{modus ponens}, an axiomatization of the conjunction-implication fragment of classical logic. 
This entails that every  classical tautology in this language is derivable in HOL. 
{Also observe that, since \emph{modus ponens} is the only rule of inference, 
 axioms~(HOL\ref{Itm:HOL1}) and (HOL\ref{Itm:HOL2}) give us that $\to$ satisfies the DDT (see Definition~\ref{def:disj-imp} (2)).} 

{For a quasi-equational presentation of $\mathbb{OL}$, thus, we may employ the following
quasi-equations
(cf.~\cite[Prop.~3.44]{F16}):}
\begin{enumerate}
    \item 
    \qquad $\alpha \approx \abs{\alpha}$ for each axiom $\alpha$ in (HOL~\ref{Itm:HOL1})--(HOL~\ref{Itm:HOL9}),
    \item \qquad if $\alpha \approx \abs{\alpha}$ and $\alpha \to \beta \approx \abs{\alpha \to \beta}$,
    then $\beta \approx \abs{\beta}$,
    \item \qquad if $\alpha \imp \beta \approx \abs{\alpha \imp \beta} $ and
    $\beta \imp \alpha \approx \abs{\beta \imp \alpha} $,
    then $\alpha \approx \beta$.
\end{enumerate}


\section{Future work}
\label{sec:fut}
{Having 
axiomatized
the logic of ordinary discourse $\CooOL$ and 
investigated the logico-algebraic features of
its structural counterpart ($\SOL$),
we believe 
to have contributed to the advancement of
the study of
connexive (multiple- and single-conclusion) logics. 
}
We view the present study as yet another vindication of the usefulness of multiple-conclusion
calculi in the study of finite-valued logics. 
Beyond the results presented here,
we speculate that the following directions may prove fruitful in future research.


    1. Due to space limitations, the algebraic aspects of $\SOL$ have been touched only sketchily in the present paper. We reserve a more comprehensive study -- including a proof of algebraizability of the Hilbert calculus introduced in Section~\ref{sec:alg}, a more perspicuous  presentation of the variety $\mathbb{OL}$, etc. -- to a future publication \cite{Riv24}.

    {2.
    The papers~\cite{egre20211,egre20212} by P.~Egr\'e \emph{et al}.~contain an extensive discussion 
    of    three-valued logics that model conditionals in natural language. 
    Among other  systems, the authors consider
 a variant of $\SOL$ (denoted CC/TT) that employs the implication $\to$ together with the
connectives $\sqcap, \sqcup$ instead of the primitive conjunction and disjunction of $\SOL$.
Present space limitations do not allow us to compare in detail our approach with that of Egr\'e \emph{et al}., so
this issue too will have to be left for future research. However, we may anticipate that our algebraic
analysis of $\SOL$ throws some light on the observations of~\cite[Sec.~4]{egre20212}, in particular
the fact that a translation $\rho \colon \phi \approx \psi \mapsto 
\{ \phi \to \psi, \psi \to \phi
\}$ does not guarantee algebraizability,  either  for $\SOL$ or for CC/TT 
(cf.~\cite[Lemma~4.18]{egre20212}). On the other hand, it is easy to see that the translation considered in Section~\ref{sec:alg}, namely
$\rho \colon \phi \approx \psi \mapsto 
\{ \phi \to \psi, \psi \to \phi, \nnot \phi \to \nnot \psi, \nnot \psi \to \nnot \phi 
\}$, guarantees algebraization for both logics, thereby settling the problem
left open in~\cite{egre20212}.
    } 
    
    3. As mentioned earlier, $\SOL$ is definitionally equivalent to an expansion of 
Da Costa and D'Ottaviano's 
logic $\mathcal{J}3$, which is in turn 
an axiomatic extension of paraconsistent 
Nelson logic. This suggests that $\mathbb{OL}$ may be viewed as a subvariety of N4-lattices and, as such,
its members may be given a twist-structure representation. Developing such a study may not only provide
further insight into $\mathbb{OL}$, but also clarify the relationship between $\SOL$
and other related non-classical  systems, 
such as 
the connexive logic C~\cite{WanConnexive05}.
    
    4. It might be interesting to develop a study similar to the present one for other logics defined from the algebra $\OL_3$
    with different sets of designated values, e.g.~$\{ \tv \}$ or $\{ \uv \}$, or the order-preserving logics associated to the orderings  naturally arising on $\OL_3$
    (cf.~Proposition~\ref{prop:ol3asn4}). An obviously different but related question is whether any of these
    systems admit an interpretation in line with Cooper's original proposal of formalizing
    reasoning in ordinary discourse. 

    
    5. An algebraic study of  the 
    (term-definable) fragments of $\SOL$ 
    axiomatized in Section~\ref{sec:prelims} also appears to be promising. Some of these --  such as the $\{\to, \nnot\}$-fragment and the $\{\land, \nnot\}$-fragment -- 
    are algebraizable, suggesting that they may be easily treatable with algebraic methods. The $\{ \imp \}$-fragment,  
    not considered here, is also easily seen to be algebraizable, and may be axiomatized by 
    the methods employed in the present study. 
    By~\cite{ss1978}, 
    every multiple-conclusion finite-valued logic is axiomatized by a finite multiple-conclusion calculus; 
         given that $\lordimpneg$ was defined using only $\Rightarrow$, a single-conclusion calculus for this fragment
        exists by Theorem~\ref{the:translate-mc-sc}.

\section*{Acknowledgements}
Vitor Greati acknowledges support from the FWF project P33548.
Sérgio Marcelino acknowledges  funding by FCT/MCTES through national funds and when applicable  co-funding by EU under the project UIDB/50008/2020. 
Umberto Rivieccio acknowledges support from the 2023-PUNED-0052
grant ``Investigadores tempranos UNED-SANTANDER''
and  from the I+D+i research project PID2022-142378NB-I00 ``PHIDELO'', funded by the Ministry of Science and Innovation of Spain.


\bibliographystyle{abbrvnat}  
\bibliography{main}

\end{document}